\definecolor{red2}{rgb}{0.9333333, 0, 0}
\definecolor{mediumblue}{rgb}{0, 0, 0.8039216}
\definecolor{magenta4}{rgb}{0.545098, 0, 0.545098}
\newcommand{\PP}{\mathsf{P}}
\newcommand{\suman}{\ensuremath{\sum_{i=1}^{n}}}
\newcommand{\E}{\ensuremath{\mathsf{E}}}
\newcommand{\Es}{\ensuremath{\mathsf{E}\,}}
\newcommand{\ind}{\ensuremath{\mathbb{I}}}
\newcommand{\Chat}{\ensuremath{\widehat{C}}}
\newcommand{\Fhat}{\ensuremath{\widehat{F}}}
\newcommand{\Ghat}{\ensuremath{\widehat{G}}}
\newcommand{\htilde}{\ensuremath{\widetilde{h}}\xspace}
\newcommand{\Jtilde}{\ensuremath{\widetilde{J}}\xspace}
\newcommand{\Jring}{\ensuremath{\mathring{J}}\xspace}
\newcommand{\muhat}{\ensuremath{\widehat{\mu}}}
\newcommand{\that}{\ensuremath{\widehat{t}}\xspace}
\newcommand{\ttilde}{\ensuremath{\widetilde{t}}\xspace}
\newcommand{\hateps}{\ensuremath{\widehat{\eps}}\xspace}
\newcommand{\epshat}{\ensuremath{\widehat{\eps}}}
\newcommand{\epsb}{\ensuremath{\boldsymbol{\eps}}}
\newcommand{\xib}{\ensuremath{\boldsymbol{\xi}}}
\newcommand{\hatG}{\ensuremath{\widehat{G}}}
\DeclareRobustCommand{\inPr}[1][n]{\ensuremath{\xrightarrow[#1\rightarrow\infty]{P}}} 
\newtheorem{theorem}{Theorem}
\newtheorem{corollary}{Corollary}
\newtheorem{lemma}{Lemma}
\theoremstyle{remark}
\newtheorem{remark}{Remark}
\DeclareRobustCommand{\tr}{^\mathsf{T}}
\renewcommand{\baselinestretch}{1.24}
\def\singlespace{\def\baselinestretch{1}\@normalsize}
\def\boxit#1{\vbox{\hrule\hbox{\vrule\kern6pt
          \vbox{\kern6pt#1\kern6pt}\kern6pt\vrule}\hrule}}
\newtheorem{assumpT}{}
\newtheorem{assumpC}{}
\newtheorem{assumpZ}{}
\definecolor{green}{rgb}{0.03, 0.57, 0.3}
\begin{document}

\vspace*{-0.8 cm}

\noindent
\title[Generalized Hadamard differentiability of the copula mapping]
{Generalized Hadamard differentiability of the copula mapping and its 
applications}
\author{Natalie Neumeyer$^{\MakeLowercase{a}}$, Marek  Omelka$^{\MakeLowercase{b}}$  
}

\maketitle

\begin{center}
$^a$ 
Department of Mathematics, University of Hamburg, Bundesstrasse 55, 20146 Hamburg, Germany \\
$^b$
Department of Probability and Statistics,
Faculty of Mathematics and Physics,  
Charles University, 
Sokolovsk\'a 83,
186\,75 Praha 8, Czech Republic
\\

\end{center}
\vspace*{0.6 cm}

\begin{center}
\today
\end{center}

\date{\today}

\maketitle

\begin{abstract}
 We consider the copula mapping, which maps a joint cumulative distribution function to the corresponding copula. 
Its Hadamard differentiablity was shown in \cite{vaart_wellner}, \cite{fermanian-bernoulli-2004} and (under less strict assumptions) in \cite{bucher2013empirical}. 
This differentiability result has proved to be 
a powerful tool to show weak convergence of empirical copula processes in various settings using the functional delta method. 
We state a generalization of the Hadamard differentiability results 
that simplifies the derivations of asymptotic expansions and weak convergence of empirical copula processes in the presence of covariates. 
The usefulness  of this result is illustrated on several applications     
which include a multidimensional functional linear model, where the copula of the error vector describes the 
dependency between the components of the vector of observations, given the functional covariate. 
\end{abstract}

\section{Introduction}

Consider a $d$-dimensional random vector $\Yb = (Y_1, \ldots , Y_d)\tr$ 
with continuous marginal cumulative distribution functions~$F_{1},\dotsc,F_{d}$. Then by the famous Sklar's Theorem \citep[see e.g.][]{nelsen_2006} 
there exists a unique copula function~$C$ such that the joint cumulative 
distribution function~$F$ can be written as
\[
 F(y_1,\dotsc,y_d) = C\big(F_{1}(y_1),\dotsc, F_{d}(y_d)\big).  
\]
The interest in copulas in applications is rooted in the fact that the 
copula function~$C$ captures the dependence structure of~$\Yb$. 
From the beginning of the use of copulas in statistical modeling 
researchers were also interested in the empirical copula~$C_{n}$ which 
presents a natural nonparametric estimator of~$C$. Suppose you observe 
random vectors $\Yb_{1},\dotsc,\Yb_{n}$ such that each of the vector 
has the same cumulative distribution function (cdf)~$F$. Then the empirical 
copula is given by 
\begin{equation*} %
 C_{n}(u_1,\dotsc,u_d) = \Fhat_{n}\big(\Fhat_{1n}^{-1}(u_1),\dotsc, 
  \Fhat_{dn}^{-1}(u_d)\big), 
\end{equation*}
where $\Fhat_{n}$ is the empirical cdf (based on $\Yb_{1},\dotsc,\Yb_{n}$) and $\Fhat_{jn}^{-1}$ ($j \in \{1,\dotsc,d\}$) denotes the generalized inverse of the marginal empirical cdf $\Fhat_{jn}$. 

Provided that $\Yb_{1},\dotsc,\Yb_{n}$ are independent and identically 
distributed the asymptotic properties 
of~$C_n$ as a process in~$(u_1,\dotsc,u_d)$  has been studied already 
by \cite{gaenssler-stutte-1987} and then later on by 
\cite{fermanian-bernoulli-2004} and \cite{tsukahara_2005} among others. 
Nevertheless the weak convergence of the copula process 
$\mathbb{C}_{n} = \sqrt{n}(C_n - C)$ on $[0,1]^{d}$ under the present 
`standard assumptions' on~$C$ (see Assumption~\ref{assump: copula1}) was 
for the first time proved in \cite{segers_empirical_2012}. 

Already before the paper by \cite{segers_empirical_2012} some researchers 
\citep[see e.g. Section~3.9.4.4 of][]{vaart_wellner, fermanian-bernoulli-2004} considered 
the empirical copula function~$C_{n}$ as a mapping of the 
standard (multivariate) empirical process. To explore this approach 
in detail it is convenient to write 
the empirical copula process~$C_{n}$ as  
\begin{equation*} %
 C_{n}(u_1,\dotsc,u_d) = \Ghat_{n}\big(\Ghat_{1n}^{-1}(u_1),\dotsc, 
  \Ghat_{dn}^{-1}(u_d)\big), 
\end{equation*}
where 
\begin{equation} \label{eq: process Gn}
 \Ghat_{n}(u_1,\dotsc,u_d) = 
  \frac{1}{n} \suman \ind\big\{Y_{1i} \leq F_{1}^{-1}(u_1),\dotsc, Y_{di} \leq F_{d}^{-1}(u_d) \big\},  
\end{equation}
with the corresponding marginals 
\[
  \Ghat_{jn}(u) =   \frac{1}{n} \suman \ind\big\{Y_{ji} \leq F_{j}^{-1}(u)\big\},
\]
where $\Yb_{i}=(Y_{1i},\dotsc,Y_{di})\tr$, $i=1,\dotsc,n$.
Note that with this notation one can write the 
empirical copula as $ C_{n} = \Phi(\Ghat_{n})$, 
where $\Phi$ is the copula mapping 
which is for cdf~$H$ on $[0,1]^{d}$  defined as 
\begin{equation*} %
 \Phi: H \to H\big(H_{1}^{-1},\dotsc, H_{d}^{-1}\big),   
\end{equation*}
with $H_{j}^{-1}$ being the generalized inverse of the marginal cdf~$H_{j}$.  The advantage of this approach is that 
provided the copula mapping~$\Phi$ is appropriately 
Hadamard differentiable at~$C$ (with the derivative $\Phi'_{C}$) and the empirical process~$\sqrt{n}\,(\Ghat_{n}-C)$ 
defined in \eqref{eq: process Gn} converges weakly 
then by Theorem~3.9.4 of \cite{vaart_wellner} 
one gets 
\begin{equation} \label{vw theorem}
  \sqrt{n}(C_{n}- C) = \sqrt{n}\,\big(\Phi(\Ghat_{n}) -  \Phi(C)\big) 
  = \Phi'_{C}\big(\sqrt{n}(\Ghat_{n} -  C) \big) + o_{P}(1). 
\end{equation}
Thus with the help of functional delta method the asymptotic distribution of the empirical copula 
can be deduced simply from the asymptotic distribution of the process 
$\sqrt{n}\,(\Ghat_{n} -  C)$ plus the knowledge of $\Phi'_{C}$.

The needed differentiability result was proved by 
 \cite{bucher2013empirical} who showed that 
 under the same standard assumption~\ref{assump: copula1} 
 as in \cite{segers_empirical_2012} the mapping~$\Phi$ 
 is Hadamard differentiable at~$C$ tangentially to the 
 set of functions~$\mathcal{D}_0=\{h\in C([0,1]^d): h(1,\dots,1)=0\mbox{ and } h(\ub)=0\mbox{ if some of the components of }\ub \mbox{ are } 0 \}$
 with the derivative given by 
 \begin{equation} \label{eq: dPhi}
 \Phi'_{C}(h)(\ub) = h(\ub)-\sum_{j=1}^d C^{(j)}(\ub) h(\ub^{(j)}),     
 \end{equation}
where $C^{(j)} = \partial C/\partial u_{j}$,  $\ub = (u_1,\dotsc,u_d)$ and $\ub^{(j)}$ denotes the vector  whose all entries of $\ub$ except the $j$-th  are equal to~$1$, 
i.e. 
\begin{equation} \label{eq: ubj}
 \ub^{(j)} = (1,\dotsc,1,u_{j},1,\dotsc,1). 
\end{equation}

It should be stressed that this result on Hadamard differentiability 
of the copula functional simplifies the derivation of the asymptotic 
distribution of the empirical copula as it is sufficient only 
to prove the weak convergence of~$\sqrt{n}\,(\Ghat_{n}-C)$. This was 
utilized in \cite{bucher2013empirical} where the authors extended 
the results on weak convergence of the copula process $\mathbb{C}_{n}$ 
to situations when $\Yb_{1},\dotsc,\Yb_{n}$ are not independent but 
they follow for instance some mixing type conditions. 

\medskip 

The contribution of our paper is a generalization of the Hadamard differentiablity of the copula mapping in order to obtain weak convergence of the empirical copula process in situations where previous results are not applicable. This generalization was motivated 
by dealing with  empirical copula processes based on 
pseudo-observations/residuals of the form 
$\widehat{\eps}_{ji}=\widehat{t}_j(Y_{ji},\Xb_i)$, ($i=1,\dotsc,n$, $j\in \{1,\dotsc,d\}$), where the mapping $\widehat{t}_j$ depends on the observed sample. To explain our approach we first consider the special case of copula estimation based on residuals in a regression model (the general description of these problems will be given in Section~\ref{sec: applic to empir resid copula}). 

\subsection{Empirical copula based on residuals - motivation example}
\label{subsec motiv example}

Suppose we observe identically distributed random pairs $\binom{\Yb_{1}}{\Xb_{1}},\dotsc,\binom{\Yb_{n}}{\Xb_{n}}$ of a generic random pair $\binom{\Yb}{\Xb}$, 
where $\Yb_{i}=(Y_{1i},\dotsc,Y_{di})\tr$ 
and $\Xb_{i}$ is a (univariate or multivariate or even functional) covariate. 
Further suppose that the following homoscedastic 
regression models for each of the components of the marginal 
response hold
\begin{equation} \label{eq: homosc model}
 Y_{ji}=\mu_j(\Xb_i)+\varepsilon_{ji},\quad i=1,\dotsc,n, 
\end{equation}
where the random vector of 
innovations~$\epsb_{i}=(\eps_{1i},\dotsc,\eps_{di})\tr$ 
is independent of~$\Xb_{i}$ and has a continuous distribution. Let $\epsb$ be the 
generic random vector corresponding to $\epsb_{1},\dotsc,\epsb_{n}$.  
Denote $F_{\eps}$, $F_{j\eps}$ and $C$ respectively 
the joint cdf, the marginal cdf and copula of~$\epsb$. Note that 
the joint distribution of~$\Yb$ given $\Xb$ is  
thanks to Sklar's theorem and independence of $\epsb$ and $\Xb$ 
given by 
\begin{align*}
 \PP\big(Y_{1} \leq y_1, \dotsc, Y_{d} \leq y_d \,|\, \Xb = \xb \big) 
&= \PP\big(\eps_{1} \leq y_1 - \mu_{1}(\xb), \dotsc, \eps_{d} \leq y_d - \mu_{d}(\xb) \big) 
\\
&= C\Big(F_{1\eps}\big(y_1 - \mu_{1}(\xb)\big), \dotsc, F_{d\eps}\big(y_d - \mu_{d}(\xb)\big) \Big)
\end{align*}
with conditional marginals $\PP(Y_{j} \leq y_j \,|\, \Xb = \xb ) =F_{j\eps}(y_j - \mu_{j}(\xb))$, $j\in\{1,\dotsc,d\}$.
Thus the copula function~$C$ of $\epsb$ captures 
the conditional dependence structure of~$\Yb$ given~$\Xb$. 

Let $\muhat_{j}$ be a regression estimator of the location function $\mu_j$ and 
\begin{equation} \label{eq: homosc resid}
\widehat{t}_j(Y_{ji},\Xb_i)= \epshat_{ji}=Y_{ji} - \muhat_j(\Xb_i), \quad i =1,\dotsc,n,  \, j\in\{1,\dotsc,d\},    
\end{equation}
be the corresponding residuals. Then the straightforward 
empirical copula estimator of~$C$ is 
\begin{equation} \label{eq: definition C_n}
\Chat_{n}(\ub) 
  = \Fhat_{n\epshat}\big(\Fhat_{1\epshat}^{-1}(u_1),\dotsc, 
  \Fhat_{d\epshat}^{-1}(u_d)\big),  \quad \ub \in [0,1]^d, 
\end{equation}
where $\Fhat_{n\epshat}$ is an empirical cdf of the residuals 
$\widehat{\epsb}_{i} = (\epshat_{1i},\dotsc,\epshat_{di})\tr$, $i=1,\dotsc,n$, 
and $\Fhat_{1\epshat}^{-1},\dotsc,\Fhat_{d\epshat}^{-1}$ be the corresponding generalized inverses.

Assume for a moment that for each $j$ the cdf $F_{j\eps}$ is \textbf{strictly increasing} (on a set where $\epshat_{j1},\dotsc,\epshat_{jn}$ take values). Then similarly as above one can rewrite $\Chat_{n}$ as 
\begin{equation} \label{eq: empir resid copula via Gn}
\Chat_{n}(\ub) 
  = \Ghat_{n\epshat}\big(\Ghat_{1\epshat}^{-1}(u_1),\dotsc, 
  \Ghat_{d\epshat}^{-1}(u_d)\big),  \quad \ub \in [0,1]^d,  
\end{equation}
where 
\begin{equation} \label{eq: Ghat epshat}
 \Ghat_{n\epshat}(\ub) = \frac{1}{n} \suman 
 \ind\big\{ \epshat_{1i} \leq F_{1\eps}^{-1}(u_1), 
 \dots,  \epshat_{di} \leq F_{d\eps}^{-1}(u_d) \big\},  
\end{equation}
and $\Ghat_{1\epshat}^{-1},\dotsc,\Ghat_{d\epshat}^{-1}$ be the corresponding generalized inverses 
of marginals
\[
 \Ghat_{j\epshat}(u) = \frac{1}{n} 
  \suman \ind\big\{ \epshat_{ji} \leq F_{j\eps}^{-1}(u) \big\}, \quad j\in\{1,\dotsc,d\}.  
\]

Note that $\Chat_n$ mimics the ideal (`oracle') empirical copula
\[
 C_n^{(or)}(\ub) 
  = \Ghat_{n\eps}\big(\Ghat_{1\eps}^{-1}(u_1),\dotsc, 
  \Ghat_{d\eps}^{-1}(u_d)\big),  \quad \ub \in [0,1]^d,  
\]
that would be based on the empirical cdf of the 
true (but unobserved) errors 
\begin{equation} \label{eq: Ghat eps}
 \Ghat_{n\eps}(\ub) = \frac{1}{n} \suman 
 \ind\big\{ \eps_{1i} \leq F_{1\eps}^{-1}(u_1), 
 \dots,  \eps_{di} \leq F_{d\eps}^{-1}(u_d) \big\}.   
\end{equation}
Nevertheless one can often (i.e. under appropriate regularity assumptions) prove that 
\begin{equation} \label{eq: equiv of Cn and oracle Cn}
  \sqrt{n}\,\big(\Chat_{n} - C_{n}^{(or)}\big) = o_{P}(1). 
\end{equation}
Thus the asymptotic distribution of $\Chat_{n}$ is the same as 
$C_{n}^{(or)}$. Note that this is rather surprising as one would intuitively expect that the uncertainity in estimation of $\mu_{j}$ 
should propagate to the asymptotic distribution of $\Chat_{n}$ (which is the case for $\Ghat_{n\hateps}$, see (\ref{eq: reprentation of Geps}) below).  

To show \eqref{eq: equiv of Cn and oracle Cn} one has basically two possibilities. The more elegant 
approach is to note that $\Chat_{n} = \Phi(\Ghat_{n\epshat})$ 
and to use the Hadamard differentiability proved in 
\cite{bucher2013empirical}. This approach was used for instance 
in \cite{noh_copula_nts_2019}. The disadvantage of this approach is that 
 weak convergence of the process 
$\mathbb{G}_{n\epshat}= \sqrt{n}(\Ghat_{n\epshat} - C)$ is needed 
which requires more strict assumptions or even may be rather  
problematic in some applications (see Section~\ref{sec: applications}). 
The  less elegant approach is to deal directly 
with the estimator $\Chat_{n}$ for given specific models  
as for instance in \cite{ogv_SCAJS_2015} or \cite{portier2018weak}. This 
approach does not require the weak convergence 
of $\mathbb{G}_{n\epshat}$. But on the other hand 
it is more cumbersome and technical as 
one needs to deal with the process 
\[
\mathbb{C}_{n}(\ub) = \sqrt{n}\,\Big[\Ghat_{n\epshat}\big(\Ghat_{1\epshat}^{-1}(u_1),\dotsc, 
  \Ghat_{d\epshat}^{-1}(u_d)\big) - C(\ub)\Big], \quad \ub \in [0,1]^{d}  , 
\]
instead of the simpler process $\mathbb{G}_{n\epshat}$. 
That is why the results in the literature are usually tied 
to the model and to the method of estimation of the effect 
of the covariate on the marginals.

The way how Hadamard differentiability is used to prove 
the surprising result~\eqref{eq: equiv of Cn and oracle Cn} 
can be summarized follows. First one shows that
uniformly in $\ub \in [0,1]^{d}$ 
\begin{equation} \label{eq: reprentation of Geps}
\Ghat_{n\epshat}(\ub) = C(\ub) + n^{-1/2}\mathbb{A}_n(\ub) 
 + n^{-1/2}\mathbb{B}_n(\ub),    
\end{equation}
where 
\begin{align} \label{eq: An typical}
 \mathbb{A}_n(\ub) &= \sqrt{n}\,\big(\Ghat_{n\eps}(\ub) - C(\ub)\big) + o_P(1), 
 \\
\label{eq: Bn typical}
\mathbb{B}_n(\ub) &=\sum_{j=1}^d C^{(j)}(\ub)\, \ZZZ_{jn}(u_j),  
\end{align}
with $\Ghat_{n\eps}$ introduced in~\eqref{eq: Ghat eps} and 
$\ZZZ_{jn}$ that is in model~\eqref{eq: homosc model} given by 
\begin{equation*} %
  \ZZZ_{jn}(u) =  \sqrt{n}\, \E_{\Xb} 
 \big [F_{j\eps}\big(F_{j\eps}^{-1}(u)+\widehat{\mu}_j(\Xb)-\mu_j(\Xb)\big)-u], 
 \quad j \in \{1,\dotsc,d\}.   
\end{equation*}
Here and throughout $\E_{\Xb}$ denotes the expectation over $\Xb$, which is the generic covariate, independent of the sample, keeping all other random variables fixed. 

Further one shows that the (joint) process 
\begin{equation} \label{eq: joint process A_n Z_j}
 \big\{\big(\AAA_{n}(\ub), \mathbb{Z}_{1n}(u_1),\dotsc,\mathbb{Z}_{dn}(u_d)\big); 
  \ub \in [0,1]^{d}\, \big\}
\end{equation}
converges weakly (jointly). Thus also the processes $\AAA_{n}$ and 
$\mathbb{B}_{n}$ converge weakly (jointly). 
So one can apply the Hadamard differentiability as in (\ref{vw theorem}) together with the 
representation~\eqref{eq: reprentation of Geps} to get 
\begin{equation} \label{eq: approx of Cn with HD prelim}
 \sqrt{n}\,\big(\Chat_{n} - C \big) 
  =  \sqrt{n}\, \big(\Phi(\Ghat_{n\epshat})-\Phi(C)\big)
   = \Phi'_{C} \big(\AAA_{n}\big) + \Phi'_{C} \big(\mathbb{B}_{n}\big) 
   + o_{P}(1),
\end{equation}
where we used the linearity of $\Phi'_{C}$. Now note that 
it follows from the specific structure of $\mathbb{B}_n$ in (\ref{eq: Bn typical}),  the Hadamard derivative in (\ref{eq: dPhi}) and $C^{(j)}(\ub^{(j)})=1$ that 
\begin{align*} 
 \Phi'_{C} \big(\mathbb{B}_{n}\big)(\ub) 
 &= \mathbb{B}_{n}(\ub)-\sum_{k=1}^dC^{(k)}(\ub)\,\mathbb{B}_{n}(\ub^{(k)})\\
 &=   
 \sum_{j=1}^d C^{(j)}(\ub)\, \ZZZ_{jn}(u_j)
 -\sum_{k=1}^dC^{(k)}(\ub)\sum_{j=1}^d C^{(j)}(\ub^{(k)})\, \ZZZ_{jn}(u_{j}^{(k)})
 \\
 &= -\sum_{k=1}^dC^{(k)}(\ub)\sum_{j=1\atop j\neq k}^d C^{(j)}(\ub^{(k)})\, \ZZZ_{jn}(1), 
\end{align*}
where $u_{j}^{(k)}$ stands for the $j$-th coordinate of $\ub^{(k)}$. 
Thus with the help of $0\leq C^{(j)}(\ub)\leq 1$ 
for all $\ub \in [0,1]^{d}$ 
and the fact that the limiting processes $\ZZZ_{1},\dotsc,\ZZZ_{d}$ 
corresponding to $\ZZZ_{1n},\dotsc,\ZZZ_{dn}$ 
typically satisfy $\PP\big(\ZZZ_{j}(1)= 0\big) = 1$ one can obtain
\begin{equation} \label{eq: dPhi at Bn diminishes}
 \sup_{\ub \in [0,1]^{d}}\big|\Phi'_{C} \big(\mathbb{B}_{n}\big)(\ub)\big| 
  \leq (d-1) \sum_{j=1}^{d} \big|\ZZZ_{jn}(1)\big| = o_{P}(1).     
\end{equation}
Now combining 
\eqref{eq: approx of Cn with HD prelim} and \eqref{eq: dPhi at Bn diminishes} 
we get the approximation 
\begin{equation} \label{eq: approx of Cn with HD}
 \sqrt{n}\,\big(\Chat_{n} - C \big) 
  = \Phi'_{C} \big(\AAA_{n}\big) 
   + o_{P}(1),       
\end{equation}
where $\mathbb{B}_{n}$ is not present on the right-hand side.  
So it remains to note that the right-hand side of the last equation 
coincides with the asymptotic representation of the empirical 
copula based on unobserved errors~$C_{n}^{(or)}$ (see (\ref{vw theorem}) and (\ref{eq: An typical})) which 
implies~\eqref{eq: equiv of Cn and oracle Cn}.

\medskip 

The aim of this paper is to introduce two useful modifications of the Hadamard differentiability result of \cite{bucher2013empirical} that 
require milder properties of the process~$\mathbb{B}_{n}$ than 
the weak convergence, but still yield the result (\ref{eq: approx of Cn with HD}). This will present not only a technique that will 
simplify the proofs. It will be also useful in situations where 
one is either not able to prove the weak convergence of~$\mathbb{B}_{n}$ 
(i.e.\ typically the weak convergence of the processes~$\ZZZ_{1n},\dotsc,\ZZZ_{dn}$) or this convergence 
simply does not hold. See Section~\ref{sec: applications} for such applications. 
Finally it is worth noting that it is rather intuitive not to require 
the weak convergence of $\mathbb{B}_{n}$ as this process is not 
present in the final asymptotic representation \eqref{eq: approx of Cn with HD}.

\medskip 

The paper is organized as follows. In Section~\ref{sec: HD} 
we state the new Hadamard diffentiability results. In 
Section~\ref{sec: applic to empir resid copula} we discuss their 
use to empirical copulas based on pseudo-observations in general. In 
Section~\ref{sec: applications} 
we give some more specific applications where the results on the 
weak convergence of the process~$\mathbb{B}_{n}$ are either not available or require 
more stringent assumptions. Some conclusions and further discussions can be found in Section~\ref{sec: discussion}. All the proofs are given in
Appendices.

\section{Hadamard differentiability of copula 
functional~\texorpdfstring{$\Phi$}{Phi}}
\label{sec: HD}

Similarly as in \cite{bucher2013empirical} let $\mathcal{D}_{\Phi}$ 
be the set of distribution functions on~$[0,1]^{d}$ whose marginal cdfs 
satisfy $H_{j}(0) = 0$ and $H_{j}(1)=1$ for each $j \in \{1,\dotsc,d\}$. 
Further let 
\begin{equation} \label{eq: generalised inverse}
 H_{j}^{-1}(u) = \inf\big\{ v \in [0,1]: H_{j}(v) \geq u\big\}, \quad u \in [0,1], 
\end{equation}
be the corresponding generalized inverse function. 

\medskip

Provided that~\eqref{eq: reprentation of Geps}  holds we will use the weak convergence of $\mathbb{A}_n$, but not of $\mathbb{B}_n$. The technique to prove~\eqref{eq: approx of Cn with HD} 
can be summarized as follows. For each $\eta > 0$ we find 
sequences of sets of functions, say $\mathcal{A}_{n}$ and 
$\mathcal{B}_{n}$, such that on one hand
\begin{equation*} %
 \liminf_{n \to \infty} \PP\big(\AAA_{n} \in \mathcal{A}_{n},  
 \mathbb{B}_{n} \in \mathcal{B}_{n}\big)
 \geq 1 - \eta.    
\end{equation*}
But at the same time uniformly in $h \in \mathcal{A}_{n}$, $\htilde \in \mathcal{B}_{n}$ 
(provided that $(C + \tfrac{h}{\sqrt{n}} + \tfrac{\htilde}{\sqrt{n}}) \in \mathcal{D}_{\Phi}$)   
\begin{equation*} %
  \sup_{\ub \in I_{n}} 
  \bigg|\sqrt{n}\, 
  \big(\Phi(C + \tfrac{h}{\sqrt{n}} + \tfrac{\htilde}{\sqrt{n}})(\ub) - \Phi(C)(\ub) \big)  - \Phi'_{C}(h)(\ub) \bigg| 
 \ntoinfty 0,       
\end{equation*}
where $I_{n}$ is either $[0,1]^{d}$ (see Theorem~\ref{thm HD of copula fctional}) or 
an appropriate sequence of subsets of $[0,1]^{d}$ 
(see Theorem~\ref{thm HD of copula fctional second}).

\medskip 

In what follows we will keep in mind the representation~\eqref{eq: reprentation of Geps} and find appropriate sets of functions 
for processes~$\AAA_{n}$ and ~$\mathbb{B}_{n}$. 

\subsection*{Sets of functions for the process 
\texorpdfstring{$\AAA_{n}$}{An}} 

Let $\ell^{\infty}([0,1]^d)$ be the set of bounded functions on~$[0,1]^{d}$ and introduce 
\begin{multline*} %
 \mathcal{L} = \big\{ h \in \ell^{\infty}([0,1]^d): h(1,\dotsc,1)=0 
 \\ \text{ and } 
h(\ub) = 0 \text{ if some of the components of } \ub \text{ are equal to 0} \big\}.   
\end{multline*}

In our applications we will consider models for which the process $\AAA_{n}$ 
converges in distribution to a limiting process~$\mathbb{A}$  
that satisfies 
\[
 \PP\big(\mathbb{A} \in \mathcal{L} \cap \mathcal{C}([0,1]^d) \big) = 1.   
\]
Note that this includes not only the i.i.d.\ setting but also various 
weak dependence concepts for strictly stationary sequences 
\citep[see the discussion below Condition~2.1 in][]{bucher2013empirical}.

Thus the above weak convergence of the process~$\AAA_{n}$ for each $\eta > 0$ 
implies that one can find a sequence of sets of functions~$\mathcal{A}_{n}$ which fulfills the following conditions
such that 
\begin{equation} \label{eq: An in Kn}
 \liminf_{n \to \infty} \PP\big(\AAA_{n} \in \mathcal{A}_{n}\big) \geq 1 - \eta.   
\end{equation}
The functions in~$\mathcal{A}_{n}$ are asymptotically 
uniformly equi-continuous, i.e. for each $\varrho >0$ there exists $\delta >0$ 
such that for all sufficiently large~$n$ 
\begin{equation} \label{eq: as unif equi cont}
 \sup_{h \in \mathcal{A}_{n}} \sup_{\|\ub - \vb\| < \delta} 
 \big|h(\ub) - h(\vb) \big| < \varrho.  
\end{equation}
Moreover the functions in~$\mathcal{A}_{n}$ are uniformly bounded and asymptotically close to the set~$\mathcal{L}$, 
i.e. 
\begin{equation} \label{eq: as bound and close to L}
\sup_{n \in \NN}\,
\sup_{h \in \mathcal{A}_{n}} \sup_{\ub \in [0,1]^{d}} |h(\ub)| < \infty 
 \quad  \text{and} \quad 
 \sup_{h \in \mathcal{A}_{n}}\, \inf_{g \in \mathcal{L}}\,\sup_{\ub \in [0,1]^{d}} |h(\ub)-g(\ub)|  
 \ntoinfty 0. 
\end{equation}

\begin{remark} \label{rem: asympt diminish at borders}
 For $j \in \{1,\dotsc,d\}$ let $h_{j}(u) = h(\ub^{(j)})$, where $\ub^{(j)}$ 
 was introduced in \eqref{eq: ubj}.  Note that 
 \eqref{eq: as unif equi cont} and \eqref{eq: as bound and close to L} 
 imply that for each $\varrho > 0$
 there exists $\delta >0$ such that for all sufficiently large~$n$ 
\begin{equation*}
  \sup_{h \in \mathcal{A}_{n}} 
     \sup_{u \in [0,\delta] \cup [1-\delta,1]} |h_{j}(u)|  < \varrho. 
\end{equation*}
\end{remark}

\subsection{Assumptions on the copula function}
In what follows we will consider the following two versions 
of the assumptions on copula~$C$.

\begin{assumpC} \label{assump: copula1}
For each $j \in \{1,\dotsc,d\}$ the first-order partial derivative 
$C^{(j)} = \partial C/\partial u_{j}$ exists and is continuous on the 
set $\{\ub \in [0,1]^{d}: 0 < u_{j} < 1 \}$. 
\end{assumpC}

\begin{assumpC} \label{assump: copula2}
There is $\beta \in [0, \frac{1}{2}]$ such that for 
each $j,k \in \{1,\dotsc,d\}$ the second order 
partial derivative $C^{(j,k)} = \partial^{2} C/(\partial u_{j} \partial u_{k})$ 
exists and satisfies  
\begin{equation*}
  C^{(j,k)}(\ub) 
  =O\Big(\tfrac{1}{u_{j}^{\beta}(1-u_{j})^{\beta}
 \,u_{k}^{\beta}(1-u_{k})^{\beta}}\Big), 
\qquad \ub = (u_{1},\dotsc,u_{d}) \in (0,1)^{d}. 
\end{equation*}
\end{assumpC}

Note that assumption \ref{assump: copula1} does not 
imply the existence of the first-order 
partial derivative $C^{(j)}$ on the complement of the set $\{\ub \in [0,1]^{d}: 0 < u_{j} < 1 \}$. 
As it will be evident later (see the definition of the sets of functions 
$\mathcal{B}$ and $\mathcal{B}_{n}^{\alpha}$ in \eqref{eq: tilde hn} and 
\eqref{eq: tilde hn for copula2} below) it is irrelevant how $C^{(j)}$ is defined 
on that complement. Nevertheless, for simplicity of notation 
it is convenient that $C^{(j)}$ is defined on $[0,1]^{d}$. To have that one can define  
$C^{(j)}$ for instance as zero in points where $C^{(j)}$ does not exist. 
Similarly for assumption \ref{assump: copula2} which does not say anything 
about the existence of $C^{(j)}$ even on the complement of $(0,1)^{d}$. 

It is worth noting that assumption~\ref{assump: copula1} is the standard 
copula assumption that was introduced in \cite{segers_empirical_2012}. 
It is also the assumption under which the Hadamard differentiability 
result was proved in \cite{bucher2013empirical}. Roughly speaking 
the corresponding differentiability result (see 
Section~\ref{subsec HD of copula assump1}) is useful when 
the effect of the covariate on the marginals can be removed 
in $\sqrt{n}$-rate. That is for instance in our motivating 
example in Section~\ref{subsec motiv example} when one (rightly) assumes 
a parametric model for~$\mu_{j}$. 

On the other hand assumption~\ref{assump: copula2} is more strict 
but for $\beta = \tfrac{1}{2}$  still satisfied for many common copulas \citep[see e.g.][]{ogv_as_2009}. This more strict assumption 
on the copula function and the corresponding differentiability 
result (see Section~\ref{subsec HD of copula assump2}) can be used to compensate 
the fact that one is able to remove the effect of the covariate on the marginals only 
at a slower than~$\sqrt{n}$-rate.

\subsection{Hadamard differentiability of copula mapping under assumption~\ref{assump: copula1}}
\label{subsec HD of copula assump1} 

In what follows we introduce a set of functions 
for the process~$\mathbb{B}_{n}$ that is of the 
form~\eqref{eq: Bn typical}.  To do that 
first for $M \in (0, \infty)$ introduce the set of bounded functions  
\begin{equation} \label{eq: set HM}
 \widetilde{\mathcal{H}}_{M} = \big\{\, \widetilde{h} \in \ell^{\infty}([0,1]) :  
   \sup_{u \in [0,1]} |\widetilde{h}(u)| \leq M  
  \big\}.  
\end{equation}

Further let 
$r$ be a bounded non-negative function such that 
\begin{equation} \label{eq: r is diminishing at borders}
 \lim_{u \to 0_{+}} r(u) = 0 = \lim_{u \to 1_{-}} r(u). 
\end{equation}
Now 
consider the set of bounded univariate functions 
\begin{equation*} %
 \mathcal{B}_{1}
 = \big\{\widetilde{h}\in \widetilde{\mathcal{H}}_{M}: |\widetilde{h}(u)| \leq r(u);  
 \, \forall u \in \big[0,1] \big\}   
\end{equation*}
and the corresponding set of multivariate functions 
\begin{equation} \label{eq: tilde hn}
\mathcal{B} = \Big\{ \widetilde{h} \in \ell^{\infty}([0,1]^d): 
 \widetilde{h}(\ub) = \sum_{j=1}^{d} C^{(j)}(\ub)\,\htilde_{j}(u_j);  
\text{ where }   \htilde_{j} \in \mathcal{B}_{1}, \, 
\forall j \in \{1,\dotsc,d\} 
\Big\}.
\end{equation}

Now for a given  sequence of positive constants $\{t_{n}\}$ going 
to zero introduce  a sequence  $\{\mathcal{D}_{n}\}$ of subsets of 
$\mathcal{D}_{\Phi}$ such that the marginals~$H_{j}$ 
have jumps of height at most $o(t_{n})$ 
\begin{equation} \label{eq: Dn}
 \sup_{H \in \mathcal{D}_{n}} \max_{j \in \{1,\dotsc,d\}} \sup_{u \in (0,1]} 
 \big|H_{j}(u)- H_{j}(u_{-})\big| = o(t_{n}). 
\end{equation}

Finally introduce 
\[
 \mathcal{D}_{n}^{(1)} = \big\{H \in \mathcal{D}_{n}:\, H = C + t_{n}\,h + t_{n}\,\widetilde{h}, 
 \text{ where } h \in \mathcal{A}_{n}, \htilde \in \mathcal{B}  \big\}. 
\]

Now we are ready to formulate the main result of this section that 
can be considered as a generalization of Theorem~2.4 of \cite{bucher2013empirical}. 

\begin{theorem} \label{thm HD of copula fctional} 
Let the set of functions $\mathcal{D}_{n}^{(1)}$ be 
as explained above. Further let $C$ satisfy \ref{assump: copula1}. Then 
 \begin{equation*}
\sup_{C_{n}^{h,\htilde} \in \mathcal{D}_{n}^{(1)}}\, 
\sup_{\ub \in [0,1]^{d}}  
\bigg|\frac{\Phi(C + t_{n}\,h + t_{n}\,\widetilde{h})(\ub) 
- C(\ub)}{t_{n}} 
  - h(\ub) + \sum_{j=1}^{d} C^{(j)}(\ub)\,h(\ub^{(j)})
\bigg| \ntoinfty 0, 
 \end{equation*}
where $C_{n}^{h,\htilde} = C + t_{n}\,h + t_{n}\,\widetilde{h}$.   
\end{theorem}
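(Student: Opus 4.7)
My plan is to adapt the decomposition used in Theorem~2.4 of \cite{bucher2013empirical}, but to carefully track the second perturbation~$\htilde$ and exploit its special structure. The key heuristic is that the formal Hadamard derivative $\Phi'_{C}$ from~\eqref{eq: dPhi} applied to any $\htilde\in\mathcal{B}$ should vanish: by~\eqref{eq: tilde hn} one has $\htilde(\ub)=\sum_{j=1}^{d}C^{(j)}(\ub)\htilde_{j}(u_{j})$, and modulo the boundary behaviour of $r$ from~\eqref{eq: r is diminishing at borders}, $\htilde(\ub^{(j)})$ reduces to $\htilde_{j}(u_{j})$ (since $C^{(j)}(\ub^{(j)})=1$ and $\htilde_{k}(1)$ is small for $k\neq j$). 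The two sums in~\eqref{eq: dPhi} then cancel, so only $\Phi'_{C}(h)$ survives on the right-hand side.

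To turn this into a uniform statement, I fix $H=C+t_{n}h+t_{n}\htilde\in\mathcal{D}_{n}^{(1)}$ and set $v_{j}^{*}=H_{j}^{-1}(u_{j})$, $\mathbf{v}^{*}=(v_{1}^{*},\dots,v_{d}^{*})$, and decompose
$$
 \Phi(H)(\ub)-C(\ub) = t_{n}(h+\htilde)(\mathbf{v}^{*}) + \bigl[C(\mathbf{v}^{*})-C(\ub)\bigr].
$$
A telescoping sum over coordinates plus the one-variable mean value theorem, valid by~\ref{assump: copula1}, rewrites the bracketed term as $\sum_{j=1}^{d}C^{(j)}(\xi^{(j)})(v_{j}^{*}-u_{j})$ for intermediate points $\xi^{(j)}$ that tend to $\ub$ whenever $\mathbf{v}^{*}\to\ub$. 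From the defining relation $H_{j}(v_{j}^{*})=u_{j}+o(t_{n})$, where the $o(t_{n})$ comes from the jump condition~\eqref{eq: Dn}, together with uniform boundedness of $h$ from~\eqref{eq: as bound and close to L} and $|\htilde_{j}|\le M$, I obtain uniformly in $\ub$ and in the admissible $(h,\htilde)$
$$
 v_{j}^{*}-u_{j} = -t_{n}\bigl(h_{j}(v_{j}^{*})+\htilde_{j}(v_{j}^{*})\bigr)+o(t_{n}),\qquad v_{j}^{*}-u_{j}=O(t_{n}),
$$
where, following the paper's notation, $h_{j}(u)=h(\ub^{(j)})$ is viewed as a function of the single coordinate $u_{j}=u$ and likewise for $\htilde_{j}$.

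Dividing by $t_{n}$ and substituting gives
$$
 \frac{\Phi(H)(\ub)-C(\ub)}{t_{n}} = (h+\htilde)(\mathbf{v}^{*}) - \sum_{j=1}^{d}C^{(j)}(\xi^{(j)})\bigl(h_{j}(v_{j}^{*})+\htilde_{j}(v_{j}^{*})\bigr) + o(1).
$$
The $h$-contribution converges uniformly to $h(\ub)-\sum_{j}C^{(j)}(\ub)h(\ub^{(j)})$: on an interior slab $u_{j}\in[\delta,1-\delta]$ this follows from the asymptotic equicontinuity~\eqref{eq: as unif equi cont} of $\mathcal{A}_{n}$ combined with uniform continuity of $C^{(j)}$ from~\ref{assump: copula1}, while the boundary layer $u_{j}\in[0,\delta)\cup(1-\delta,1]$ is handled by Remark~\ref{rem: asympt diminish at borders}. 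Using $\htilde(\mathbf{v}^{*})=\sum_{j}C^{(j)}(\mathbf{v}^{*})\htilde_{j}(v_{j}^{*})$, the $\htilde$-contribution collapses to
$$
 \sum_{j=1}^{d}\bigl(C^{(j)}(\mathbf{v}^{*})-C^{(j)}(\xi^{(j)})\bigr)\,\htilde_{j}(v_{j}^{*}),
$$
which is exactly the cancellation mentioned in the heuristic, modulo a continuity error in $C^{(j)}$.

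The main obstacle is to show that this last sum is $o(1)$ uniformly in $\ub$ and in $\htilde\in\mathcal{B}$, since neither $\htilde$ nor its components $\htilde_{j}$ are assumed continuous. The argument is a $\delta$-splitting in each coordinate. On the interior $u_{j}\in[\delta,1-\delta]$, both $\mathbf{v}^{*}$ and $\xi^{(j)}$ stay bounded away from the $j$-th facet, so uniform continuity of $C^{(j)}$ on a suitable compact set makes $|C^{(j)}(\mathbf{v}^{*})-C^{(j)}(\xi^{(j)})|$ arbitrarily small while $|\htilde_{j}|\le M$ is uniformly bounded. On the boundary layer $u_{j}\in[0,\delta)\cup(1-\delta,1]$, $v_{j}^{*}$ is close to $\{0,1\}$, so $|\htilde_{j}(v_{j}^{*})|\le r(v_{j}^{*})$ is arbitrarily small by~\eqref{eq: r is diminishing at borders} while $|C^{(j)}(\mathbf{v}^{*})-C^{(j)}(\xi^{(j)})|\le 2$ is harmless. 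Letting $n\to\infty$ first and then $\delta\to 0$ produces the required uniform~$o(1)$ and completes the proof.
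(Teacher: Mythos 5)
Your proposal is correct and follows essentially the same route as the paper: both invert the perturbed marginals, expand $\Phi(H)(\ub)=H(\mathbf{v}^{*})$, apply the mean value theorem to $C(\mathbf{v}^{*})-C(\ub)$ together with the expansion $v_j^{*}-u_j=-t_n(h_j(v_j^{*})+\htilde_j(v_j^{*}))+o(t_n)$ (which the paper isolates as Lemma~\ref{lemma HD of general inverse second}), and then kill the $\htilde$-contribution by the same $\delta$-splitting into an interior slab (uniform continuity of $C^{(j)}$) and a boundary layer (decay of $r$, boundedness of $C^{(j)}$). The only difference is bookkeeping: you merge the two $C^{(j)}$-replacement errors into the single term $\sum_j(C^{(j)}(\mathbf{v}^{*})-C^{(j)}(\xi^{(j)}))\htilde_j(v_j^{*})$, which is equivalent to the paper's two-step cancellation.
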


\subsection{Hadamard differentiability of copula mapping under assumption~\ref{assump: copula2}} 
\label{subsec HD of copula assump2}
Note that in Theorem~\ref{thm HD of copula fctional} both perturbations  
of the copula function~$C$ presented by $h$ and $\htilde$ 
have the same rates $t_{n}$. In what follows we allow that 
the perturbation corresponding to $\htilde$ converges 
to zero at a rate $\ttilde_{n}$ which is slower than $t_{n}$. This will be useful 
when the processes $\ZZZ_{jn}$ in~\eqref{eq: Bn typical} 
are not bounded in probability. 

Of course there is some price to be paid which depends on the actual 
rate of $\ttilde_{n}$ that we want to allow. This price is paid 
partly by the more severe assumption on the copula~$C$ and 
partly by assuming a finer behaviour of $\htilde$ when one 
is close to zero or one. A part of the price is also that the 
result will not hold uniformly on $[0,1]^{d}$ but only at 
an increasing sequence of subsets of $[0,1]^{d}$.  

\smallskip 

For $\epsilon > 0$ and $\vartheta \in (0,1]$ denote 
\[
 \tilde{I}_{1n}(\epsilon) = 
\big[\epsilon\, t_{n}^{\vartheta}, 1 -  \epsilon\, t_{n}^{\vartheta}\big] ,  
\quad 
\tilde{I}_{n}(\epsilon) = \big[\epsilon\, t_{n}^{\vartheta}, 1 -  \epsilon\, t_{n}^{\vartheta}\big]^{d}. 
\] 
Further for $\alpha > 0$ and $\epsilon > 0$ fixed introduce the sets of functions 
\begin{equation} \label{eq: set B1alpha}
 \mathcal{B}_{1n}^{\alpha}
 = \bigg\{\widetilde{h}\in \widetilde{\mathcal{H}}_{M}:  
 \ 
\sup_{u \in \tilde{I}_{1n}(\epsilon)}  
\tfrac{|\htilde(u)|}{u^{\alpha}(1-u)^{\alpha}} 
  < 2\,M 
\bigg\}
\end{equation}
and 
\begin{equation} \label{eq: tilde hn for copula2} 
\mathcal{B}_{n}^{\alpha} = \Big\{\widetilde{h} \in {\ell^{\infty}([0,1]^d)}: 
 \widetilde{h}(\ub) = \sum_{j=1}^{d} C^{(j)}(\ub)\,\htilde_{j}(u_j);  
 \text{ where }   \htilde_{j} \in \mathcal{B}_{1n}^{\alpha}, \, 
\forall j \in \{1,\dotsc,d\} 
\Big\}.
\end{equation}

\medskip

Finally for sequences of constants $\{t_{n}\}$ and $\{\ttilde_{n}\}$  going to zero 
introduce 
\[
 \mathcal{D}_{n}^{(2)} = \big\{H \in \mathcal{D}_{n}:\, H = C + t_{n}\,h + \ttilde_{n}\,\htilde, 
 \text{ where } h \in \mathcal{A}_{n}, \htilde \in \mathcal{B}_{n}^{\alpha}  \big\} 
\]
and $\mathcal{D}_{n}$ was introduced in \eqref{eq: Dn}.

\begin{theorem} \label{thm HD of copula fctional second} 
Assume that \ref{assump: copula2} holds and 
$\alpha \geq 0$, 
$\gamma \in [0,1/4)$ and $\vartheta \in (0,1]$ be 
constants such that 
\begin{equation} \label{eq: assump on gamma and vartheta}
 \gamma \geq \tfrac{(\beta - \alpha)_{+}}{4(1-\beta)} 
 \quad \text{and} \quad 
 \vartheta = \min\big\{\tfrac{1+4 \gamma}{2(1-\alpha)}, 1 \big\}. 
\end{equation}
Further let  $\mathcal{D}_{n}^{(2)}$ be as explained above with 
$\ttilde_{n} = o\big(t_{n}^{1/2+2\gamma}\big)$. 
Then for each $\epsilon > 0$ and $M \in (0, \infty)$
 \begin{equation*} %
\sup_{C_{n}^{h,\htilde} \in \mathcal{D}_{n}^{(2)}}\, 
\sup_{\ub \in \tilde{I}_{n}(\epsilon)}  
\bigg|\frac{\Phi(C + t_{n}\,h 
 + \ttilde_{n}\,\widetilde{h})(\ub) 
- C(\ub)}{t_{n}} 
  - h(\ub) + \sum_{j=1}^{d} C^{(j)}(\ub)\,h(\ub^{(j)})
\bigg| \ntoinfty 0, 
 \end{equation*}
where $C_{n}^{h,\htilde} = C + t_{n}\,h + \ttilde_{n}\,\widetilde{h}$  and 
$\ub^{(j)}$ is given in \eqref{eq: ubj}. 
\end{theorem}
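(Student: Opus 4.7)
The plan is to adapt the strategy behind Theorem~\ref{thm HD of copula fctional}, replacing the first-order analysis valid on all of $[0,1]^{d}$ with a second-order Taylor expansion valid on $\tilde{I}_{n}(\epsilon)$, and exploiting the structure of $\htilde$ so that its leading-order contribution cancels algebraically, leaving only quadratic remainders whose size is governed by~\ref{assump: copula2} together with the boundary decay built into $\mathcal{B}_{n}^{\alpha}$. Writing $H = C + t_{n}\,h + \ttilde_{n}\,\htilde$, setting $v_{j} = H_{j}^{-1}(u_{j})$ and $\vv = (v_{1},\dotsc,v_{d})$, I decompose
\begin{equation*}
 \Phi(H)(\ub) - C(\ub) = t_{n}\,h(\vv) + \ttilde_{n}\,\htilde(\vv) + \bigl[C(\vv) - C(\ub)\bigr].
\end{equation*}
The jump bound~\eqref{eq: Dn} on members of $\mathcal{D}_{n}$ gives the near-identity $v_{j} - u_{j} = -t_{n}\,h_{j}(v_{j}) - \ttilde_{n}\,\htilde_{j}(v_{j}) + o(t_{n})$, and a second-order Taylor expansion, licensed by~\ref{assump: copula2}, produces
\begin{equation*}
 C(\vv) - C(\ub) = \sum_{j=1}^{d} C^{(j)}(\ub)(v_{j}-u_{j}) + \tfrac{1}{2}\sum_{j,k=1}^{d} C^{(j,k)}(\boldsymbol{\zeta})(v_{j}-u_{j})(v_{k}-u_{k})
\end{equation*}
for an intermediate point $\boldsymbol{\zeta}$ on the segment joining $\ub$ and $\vv$.

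The pivotal algebraic point is the cancellation produced by $\htilde(\ub) = \sum_{j} C^{(j)}(\ub)\,\htilde_{j}(u_{j})$: the $\htilde$-contribution to the linear part becomes
\begin{equation*}
 \ttilde_{n}\,\htilde(\vv) - \ttilde_{n}\sum_{j=1}^{d} C^{(j)}(\ub)\,\htilde_{j}(v_{j}) = \ttilde_{n}\sum_{j=1}^{d}\bigl[C^{(j)}(\vv) - C^{(j)}(\ub)\bigr]\,\htilde_{j}(v_{j}),
\end{equation*}
which is already of second order. Treating the analogous $t_{n}$-piece $t_{n}\,h(\vv) - t_{n}\sum_{j} C^{(j)}(\ub)\,h_{j}(v_{j})$ via the asymptotic equicontinuity of $h \in \mathcal{A}_{n}$, and invoking Remark~\ref{rem: asympt diminish at borders} near the boundary to replace $h_{j}(v_{j})$ by $h_{j}(u_{j}) = h(\ub^{(j)})$ up to an $o(t_{n})$ error, recovers after division by $t_{n}$ the claimed leading term $h(\ub) - \sum_{j} C^{(j)}(\ub)\,h(\ub^{(j)})$.

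All residuals are of the quadratic shapes $C^{(j,k)}(\boldsymbol{\zeta})(v_{j}-u_{j})(v_{k}-u_{k})$ and $\ttilde_{n}\,C^{(j,k)}(\boldsymbol{\zeta})(v_{k}-u_{k})\,\htilde_{j}(v_{j})$. I bound them using the growth estimate $|C^{(j,k)}(\boldsymbol{\zeta})| = O\bigl(\zeta_{j}^{-\beta}(1-\zeta_{j})^{-\beta}\zeta_{k}^{-\beta}(1-\zeta_{k})^{-\beta}\bigr)$ from~\ref{assump: copula2}, the inequality $|v_{j}-u_{j}| \leq t_{n}\|h\|_{\infty} + \ttilde_{n}|\htilde_{j}(v_{j})|$, and the boundary decay $|\htilde_{j}(v_{j})| \leq 2M\,v_{j}^{\alpha}(1-v_{j})^{\alpha}$ that is available on $\tilde{I}_{1n}(\epsilon)$ through~\eqref{eq: set B1alpha}. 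On $\tilde{I}_{n}(\epsilon)$ one has $u_{j}(1-u_{j}) \geq c\,\epsilon\,t_{n}^{\vartheta}$ and the same lower bound transfers (up to absorbing constants) to $v_{j}$ and $\zeta_{j}$, so each residual collapses to a monomial in $t_{n}$. Dividing by $t_{n}$ and substituting $\ttilde_{n} = o\bigl(t_{n}^{1/2+2\gamma}\bigr)$, the conditions $\gamma \geq (\beta-\alpha)_{+}/(4(1-\beta))$ and $\vartheta = \min\{(1+4\gamma)/(2(1-\alpha)),1\}$ are exactly sharp to keep every resulting exponent of $t_{n}$ non-negative, yielding the claimed uniform $o(1)$ bound.

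The main obstacle is the rate-balancing in this final step: one must track separately the three interaction types $t_{n}^{2}$, $t_{n}\,\ttilde_{n}$, and $\ttilde_{n}^{2}$, and within each type pair a factor $v_{j}^{\alpha}(1-v_{j})^{\alpha}$ supplied by $\htilde$ with the conjugate singularity $\zeta_{j}^{-\beta}(1-\zeta_{j})^{-\beta}$ coming from $C^{(j,k)}$ \emph{before} minimizing over $\tilde{I}_{n}(\epsilon)$; the resulting net boundary exponent $-(\beta-\alpha)_{+}$ is precisely what the lower bound on $\gamma$ in~\eqref{eq: assump on gamma and vartheta} is designed to absorb. A secondary technical point is verifying that $\vv$ (and hence $\boldsymbol{\zeta}$) remains in a slightly enlarged version of $\tilde{I}_{n}(\epsilon)$, which uses $|v_{j}-u_{j}| = O(t_{n}+\ttilde_{n})$ against the boundary gap $\epsilon\,t_{n}^{\vartheta}$ and explains the particular choice of $\vartheta$ in~\eqref{eq: assump on gamma and vartheta}.
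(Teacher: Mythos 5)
Your proposal follows essentially the same route as the paper: write $\Phi(H)(\ub)=H(\vv)$ with $v_j=H_j^{-1}(u_j)$, expand $C(\vv)-C(\ub)$ to second order using \ref{assump: copula2}, exploit the exact cancellation of the leading $\ttilde_n$-term coming from the product form $\htilde(\ub)=\sum_j C^{(j)}(\ub)\htilde_j(u_j)$, and absorb the quadratic remainders by pairing the $\alpha$-decay of $\htilde_j$ from \eqref{eq: set B1alpha} against the $\beta$-singularity of $C^{(j,k)}$, which is exactly the bookkeeping behind \eqref{eq: assump on gamma and vartheta}. The decomposition, the cancellation mechanism, and the rate analysis all match the paper's proof (which packages the inverse-marginal estimates into Lemma~\ref{lemma HD of general inverse second}).

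One step is justified incorrectly as written: the ``secondary technical point'' that $\vv$ and $\boldsymbol{\zeta}$ stay in an enlarged $\tilde I_n(\epsilon)$ does \emph{not} follow from comparing $|v_j-u_j|=O(t_n+\ttilde_n)$ with the boundary gap $\epsilon\,t_n^{\vartheta}$. Since $\gamma<\tfrac14$, one only has $\ttilde_n=o(t_n^{1/2+2\gamma})$ with $\tfrac12+2\gamma<1$, and whenever $\alpha>0$ the definition of $\vartheta$ gives $\vartheta>\tfrac12+2\gamma$ (e.g.\ $\alpha=\tfrac12$, $\gamma=0$ yields $\vartheta=1$ but $\ttilde_n=o(t_n^{1/2})$ only), so the crude displacement bound is much larger than the gap. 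The correct argument — the content of Lemma~\ref{lemma HD of general inverse second}(iii) — is a contradiction argument that re-uses the weighted bound $|\htilde_j(v)|\le 2M\,v^{\alpha}(1-v)^{\alpha}$ near the endpoints: assuming $v_j>2u_j$ one divides the resulting inequality by $u_j$ and uses $u_j\ge\epsilon\,t_n^{\vartheta}$ together with $\vartheta\le\tfrac{1/2+2\gamma}{1-\alpha}$ to force $1\le o(1)$. You already invoke this weighted decay for the remainder estimates, so the fix is available to you; but without it the containment step, and hence the applicability of the Taylor bound along the segment joining $\ub$ and $\vv$, is not established.
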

\section{Application to empirical copulas based on pseudo-observations} 
\label{sec: applic to empir resid copula}

In this section we generalize the model used in 
Section~\ref{subsec motiv example} and show how the results 
of the previous section can be applied.

Suppose we observe identically (but not necessarily independently) distributed random pairs $\binom{\Yb_{1}}{\Xb_{1}},\dotsc,\binom{\Yb_{n}}{\Xb_{n}}$ of a generic random pair 
$\binom{\Yb}{\Xb}$, where $\Yb_{i}=(Y_{1i},\dotsc,Y_{di})\tr$ 
and $\Xb_{i}$ is a covariate ($q$-dimensional or even functional). 
Often we are interested in the conditional distribution of $\Yb$ given the value 
of the covariate. To simplify the situation it is often assumed 
that $\Xb$ affects only the marginal 
distributions of~$Y_j\ (j \in \{1,\dotsc,d\})$,  but does not affect the dependence structure of~$\Yb$. 
More formally, it is assumed that there exists a copula $C$ such that the joint conditional distribution 
of $\Yb$ given $\Xb = \xb$ can be for all $\xb \in S_{\Xb}$ (the support 
of $\Xb$) written as 
\begin{equation*}
F_{\xb}(y_{1}, \ldots, y_{d}) = \PP (Y_{1} \leq y_{1}, \ldots, Y_{d} \leq y_{d} \mid \Xb = \xb ) = C \big(F_{1\xb}(y_1),\ldots,F_{d\xb}(y_d)\big)
\end{equation*}
where $F_{j\xb}(y_j)=\PP (Y_{j} \leq y_{j}\mid \Xb = \xb)$, $j \in \{1,\dotsc,d\}$.

Let $t_{j}(y;\xb)$ be a real valued function defined on 
$S_{Y_j}\times S_{\Xb}$ (where $S_{Y_j}$ is the support of $Y_j$)
such that the random variable 
$\eps_{j} = t_{j}(Y_{j};\Xb)$ is independent 
of~$\Xb$ and has a continuous distribution. Moreover assume 
that for each $\xb \in S_{\Xb}$ 
the function $t_{j}(\cdot;\xb)$ is increasing in the first argument. 
Then the copula $C$ is the copula of the  random 
vector $(\eps_{1},\dotsc,\eps_{d})\tr$. 

Note that one can always take $t_{j}(y;\xb) = F_{j\xb}(y)$. 
Nevertheless simpler functions can be available depending on the assumptions about the effect of the covariate 
on the marginal distributions. For instance in 
model~\eqref{eq: homosc model} one can use simply   
$t_{j}(y;\xb) = y - \mu_{j}(\xb)$.

\subsection{Empirical copula estimation}

Note that ideally one would base the estimate of~$C$ on random variables
\begin{equation*} %
  \eps_{ji} = t_{j}(Y_{ji}; \Xb_{i}).  
\end{equation*}
As the transformation $t_{j}$ is usually 
in practice unknown, then one needs to work with 
the `estimates' of $\eps_{ji}$ (`pseudo-observations')  
\begin{equation}  \label{eq: resid general}
 \widehat{\eps}_{ji} = \that_{j}(Y_{ji}; \Xb_{i}),  
 \quad i=1,\dotsc,n; \ j \in \{1,\dotsc,d\},  
\end{equation}
where for instance in model~\eqref{eq: homosc model} 
the pseudo-observations are the residuals as in \eqref{eq: homosc resid}. 
The empirical copula (based on estimated pseudo-observations) is then 
defined analogously as in Section~\ref{subsec motiv example}. 

Provided that for each $j$ the cdf~$F_{j\eps}$ of $\eps_{j}=t_{j}(Y_j;\Xb)$ (introduced above) is strictly increasing one can 
rewrite the empirical copula as \eqref{eq: empir resid copula via Gn} 
using function $\Ghat_{n\epshat}$ introduced in \eqref{eq: Ghat epshat}.  
More generally if $F_{j\eps}$ is not strictly increasing but it is continuous 
then one can always find  
a sequence of cdf's, say $\{\widetilde{F}_{nj\eps}\}$ such that $\widetilde{F}_{nj\eps}$ 
is strictly increasing for each $n$ and at the same time
\begin{equation} \label{eq: tilde Fnjeps close to Fjeps}
 \sup_{u \in [0,1]}\big|F_{j\eps}\big(\widetilde{F}_{nj\eps}^{-1}(u)\big) - u \big| = o\big(\tfrac{1}{\sqrt{n}}\big). 
\end{equation}
Now $\Ghat_{n\epshat}$ will not be defined as in \eqref{eq: Ghat epshat} but rather as 
\begin{equation*}  %
 \Ghat_{n\epshat}(\ub) = \frac{1}{n} \suman 
 \ind\big\{ \epshat_{1i} \leq \widetilde{F}_{n1\eps}^{-1}(u_1), 
 \dots,  \epshat_{di} \leq \widetilde{F}_{nd\eps}^{-1}(u_d) \big\}.   
\end{equation*}
Then \eqref{eq: empir resid copula via Gn} holds even if some of the cdfs 
$F_{1\eps},\dotsc,F_{d\eps}$ are not strictly increasing. 

\medskip

\begin{remark}
 Note that as we are in the conditional copula settings 
 we need to use the word `pseudo-observation'  
 in the broader sense than is often used in the literature about copulas.  
 In view of \cite{ghoudi-remillard-1998} 
 our pseudo-observations are functions of the observed $Y_{ji}$ 
 and the estimated conditional law~$F_{j\xb}$. 
\end{remark}

First we formulate generic assumptions that need to be 
verified for the given marginal models and methods of estimation. 

\subsection{Generic assumptions}

In what follows let $P$ stand for the measure of the 
random vector $\binom{\Yb}{\Xb}$ and $P_{n}$ be the corresponding 
empirical measure based on $\binom{\Yb_{1}}{\Xb_{1}},\dotsc,\binom{\Yb_{n}}{\Xb_{n}}$. 

Further  let $\mathcal{T}_{j}$ 
be a set of real functions  of the form $t_{j}(y; \xb)$ 
defined on $S_{Y_{j}} \times S_{\Xb} \to \RR$  that are for each fixed 
$\xb \in S_{\Xb}$ increasing in~$y$.   

While the first assumption justifies that the empirical 
process of pseudo-observations is asymptotically uniformly 
equicontinuous in probability \citep[see e.g. p.~38 of][]{vaart_wellner}, 
the second assumption ensures that pseudo-observations are 
consistent estimators of unobserved $\eps_{ji}$.

\vspace*{-2mm}

\begin{assumpT} \label{assump: donsker}
Suppose that for each $j \in \{1,\dotsc,d\}$ there exists   
a set of real functions $\mathcal{T}_{j}$ 
such that 
\[
 \PP\big(\,\that_{j} \in \mathcal{T}_{j}\,\big) \ntoinfty 1  
\]
and at the same time the empirical process 
$\sqrt{n}(P_{n}-P)$ indexed by the class of functions 
\begin{equation} \label{eq: set of functions F}
 \mathcal{F} = \big\{
   (\yb,\xb) \mapsto 
   \ind\{\tilde{t}_{1}(y_1; \xb)  \leq z_1,\dotsc,\tilde{t}_{d}(y_d; \xb)  \leq z_{d} \}\,;\, 
    z_1,\dotsc,z_d \in \RR,\, \tilde{t}_1 \in \mathcal{T}_{1},\dotsc, \tilde{t}_d \in \mathcal{T}_{d}
 \big\} 
\end{equation}
is asymptotically uniformly equicontinuous in probability 
with respect to the semimetric 
\begin{equation} \label{eq: semimetric rho}
 \rho(f_1,f_2) = P|f_{1}-f_{2}| = \E\, |f_{1}(\Yb,\Xb) - f_{2}(\Yb,\Xb)|. 
\end{equation}
\end{assumpT}

\begin{assumpT} \label{assump: true tj}
 For each $j \in \{1,\dotsc,d\}$ there exists a function 
 $t_{j} \in \mathcal{T}_{j}$ such that for each $y \in S_{Y_{j}}$ 
 and  $\xb \in S_{\Xb}$
\[
 \that_{j}(y;\xb) \inPr t_{j}(y;\xb). 
\]
Further the random vector
\[
  \epsb=(\eps_1,\dotsc,\eps_d)\tr, \ \text{where } \eps_{j}=t_{j}(Y_{j};\Xb), 
  \ j \in \{1,\dotsc,d\}, 
\]
has a continuous cdf and is independent of~$\Xb$. 
\end{assumpT}

\begin{remark}\label{remark: Donsker}
 Note that in case of iid random vectors assumption~\ref{assump: donsker}  
 is usually justified by showing that for each $j \in \{1,\dotsc,d\}$  the class of functions
 \[
  \mathcal{F}_{j} = \Big\{
   (y,\xb) \mapsto 
   \ind\{\tilde{t}(y; \xb)  \leq z \}\,  ; \, 
    z \in \RR, \tilde{t} \in \mathcal{T}_{j}
 \Big\}
 \]
 is $P_{j}$-Donsker, where $P_{j}$ is the measure of the random vector 
 $\binom{Y_j}{\Xb}$.  Validity of assumption~\ref{assump: donsker} then follows by 
 Example~2.10.8 of \cite{vaart_wellner}.   
 Nevertheless the formulation of \ref{assump: donsker}  
 allows also for dependent random variables. See  for instance \cite{noh_copula_nts_2019} 
 where \ref{assump: donsker} is verified in the context multivariate 
 nonparametric AR-ARCH times series that satisfies an appropriate $\beta$-mixing 
 assumption. Another application can be found in 
 Section~\ref{subsec: linear model with beta mix}. 
\end{remark}

\noindent \textbf{Assumptions on the quality of 
\texorpdfstring{$\widehat{t}_{j}$}{hat tj}.}
Roughly speaking assumptions~\ref{assump: donsker} 
and \ref{assump: true tj} justify that in representation~\eqref{eq: reprentation of Geps} 
the process $\AAA_{n}$ has really the form~\eqref{eq: An typical}. 
Further we specify the assumptions so that %
the process $\mathbb{B}_{n}$ is of the form~\eqref{eq: Bn typical}  
with appropriately defined processes~$\ZZZ_{jn}$. 

For a fixed~$\xb \in S_{\Xb}$ 
let $\that_{j}^{-1}(\cdot; \xb)$ be a (possibly generalized) 
inverse function to $\that_{j}(\cdot; \xb)$, i.e. 
\[
 \that_{j}^{-1}(z; \xb) = \inf\big\{ y \in \RR: \that_{j}(y; \xb) \geq z \big\}. 
\]
Now for $j \in \{1,\dotsc,d\}$  let $F_{j\eps}$ be the cdf of~$\eps_{j}$ and denote 
\begin{equation}
\label{eq: Zjn}
Z_{jn}(u; \xb) = F_{j\eps}\Big(
 t_{j}\big\{\that_{j}^{-1}\big(\widetilde{F}_{nj\eps}^{-1}(u); \xb\big); \xb \big\}
 \Big) 
 - u  
\quad \text{ and } \quad  
 \ZZZ_{jn}(u) = \sqrt{n}\,\E_{\Xb} Z_{jn}(u; \Xb),  
\end{equation}
where $\{\widetilde{F}_{nj\eps}\}$ is a sequence of strictly increasing cdfs that satisfy 
\eqref{eq: tilde Fnjeps close to Fjeps}. Note that the existence of such a sequence is guaranteed 
by the continuity of~$F_{j\eps}$ which is assumed in \ref{assump: true tj}.

Now we are ready to formulate assumptions on $Z_{jn}(u; \xb)$ so that 
one can justify that the process~$\mathbb{B}_{n}$ 
in representation~\eqref{eq: approx of Cn with HD} is really 
of the form~\eqref{eq: Bn typical} and at the 
same time the properties of processes~$\ZZZ_{jn}(u)$ match with the 
corresponding Hadamard differentiability result. 

We will introduce two versions (namely \ref{assump: process Z strict} and \ref{assump: process Z less strict})  of the assumptions on $Z_{jn}(u; \xb)$. While \ref{assump: process Z strict} is more strict and matches with the less strict assumption on the copula function \ref{assump: copula1}, assumption \ref{assump: process Z less strict} is milder but requires 
the more strict assumption on the copula \ref{assump: copula2}.

\begin{assumpZ} \label{assump: process Z strict}
For each $\epsilon > 0$ there exist a function~$M$ defined on~$S_{\Xb}$ 
and a bounded function $r$ defined on $(0,1)$ that satisfies 
\eqref{eq: r is diminishing at borders} such that for each $j \in \{1,\dotsc,d\}$, 
each $u \in \big[\frac{\epsilon}{\sqrt{n}}, 1 -  \frac{\epsilon}{\sqrt{n}}\big]$  
\begin{equation*} %
  |Z_{jn}(u; \Xb)| 
   \leq M(\Xb)\,\big[r(u)\,O_{P}\big(\tfrac{1}{\sqrt{n}}\big) 
   + o_{P}\big(\tfrac{1}{\sqrt{n}}\big)\big] 
  \quad \text{with} \quad
 \E\, M(\Xb) < \infty, 
\end{equation*}
where the terms $O_{P}\big(\tfrac{1}{\sqrt{n}}\big)$, $o_{P}\big(\tfrac{1}{\sqrt{n}}\big)$ depend neither on~$u$ nor $\Xb$. 
\end{assumpZ}

In what follows $\beta$ will be the constant from assumption~\ref{assump: copula2}.

\begin{assumpZ} \label{assump: process Z less strict}
For each $\epsilon >0$ there exists a function~$M$ defined on~$S_{\Xb}$ 
such that:
\begin{itemize}
 \item for $\beta = 0$ one has 
\begin{equation*} 
 \sup_{u \in \big[ \frac{\epsilon}{n^{1/2}}, 1 -  \frac{\epsilon}{n^{1/2}}\big]}  
  |Z_{jn}(u; \Xb)|  = M(\Xb)\,o_{P}\big(n^{-1/4}\big)
  \text{ with }\ \E M^{2}(\Xb) < \infty;  
\end{equation*}
\item for $\beta > 0$ there exist constants $\alpha \geq 0$, $\gamma \in [0,\frac{1}{4}]$ and $s \geq 2$ so that 
\begin{equation*} %
 \sup_{u \in \big[ \frac{\epsilon}{n^{\vartheta/2}}, 1 -  \frac{\epsilon}{n^{\vartheta/2}}\big]}  
  \frac{|Z_{jn}(u; \Xb)|}{u^{\alpha}(1-u)^{\alpha}} 
  = M(\Xb)\,o_{P}\big(n^{-(1/4+\gamma)}\big),  
\end{equation*}
where 
\begin{equation} \label{eq: gamma}
  \gamma \geq 
  \frac{\frac{s-2}{s-1}(\beta - \alpha)_{+}}{4\big(1 - \alpha - (\beta-\alpha)\frac{s}{s-1}\big)_{+}}, 
  \quad 
  \Es [M(\Xb)]^{s} < \infty,   
\end{equation}
and 
\begin{equation} \label{eq: vartheta}
  \vartheta = \min\bigg\{\frac{\frac{s-2}{s-1}+4 \gamma \frac{s}{s-1}}{2(1-\alpha)}, 1 \bigg\}.   
\end{equation}
\end{itemize}
Moreover the term $o_{P}(\cdot)$ in either of the 
versions of the assumption depends neither on~$u$ nor~$\Xb$.
\end{assumpZ}

\begin{remark} \label{rem: Zn in view of assumptions} 
Regarding \ref{assump: process Z less strict} note that typically one is interested 
in situations when $\vartheta = 1$ as this implies the weak convergence of the copula 
process on $[0,1]^{d}$. To achieve that one needs 
\begin{equation*} %
 s \geq \frac{(4-2\alpha)_{+}}{\big(1+4 \gamma - 2(1-\alpha)\big)_{+}}\,. 
\end{equation*}
\end{remark}

\subsection{General results on empirical copulas based on pseudo-observations}

\begin{theorem} \label{thm for param adjustment}
Suppose that  assumptions \ref{assump: donsker}, 
\ref{assump: true tj}, \ref{assump: copula1}, 
and \ref{assump: process Z strict}  
are satisfied. Then  $\eqref{eq: equiv of Cn and oracle Cn}$ holds, i.e.\
$$ \sup_{\ub \in [0,1]^d} \sqrt{n}\,\big|\Chat_{n}(\ub) 
 -  C_{n}^{(or)}(\ub) \big| = o_{P}(1).    
$$
\end{theorem}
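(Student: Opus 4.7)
The plan is to apply Theorem~\ref{thm HD of copula fctional} twice, once to $\Chat_n = \Phi(\Ghat_{n\hateps})$ and once to $C_n^{(or)} = \Phi(\Ghat_{n\eps})$, and then to subtract the two expansions so that the dominant term $\Phi'_C(\mathbb{A}_n)$ drops out.

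The first step is to derive a uniform stochastic expansion
\[
 \Ghat_{n\hateps}(\ub) = C(\ub) + n^{-1/2}\,\mathbb{A}_n(\ub) + n^{-1/2}\,\mathbb{B}_n(\ub) + o_P(n^{-1/2}),
\]
with $\mathbb{A}_n(\ub) = \sqrt n(\Ghat_{n\eps}-C)(\ub)+o_P(1)$ and $\mathbb{B}_n(\ub) = \sum_{j=1}^{d} C^{(j)}(\ub)\,\ZZZ_{jn}(u_j)$. I would split $\Ghat_{n\hateps}(\ub) = (P_n - P)f_{n,\ub} + Pf_{n,\ub}$, where $f_{n,\ub}(\Yb,\Xb) = \ind\{\that_1(Y_1;\Xb) \leq \widetilde F_{n1\eps}^{-1}(u_1),\dotsc,\that_d(Y_d;\Xb) \leq \widetilde F_{nd\eps}^{-1}(u_d)\}$, and denote by $f_{n,\ub}^{*}$ the analogous function with $\that_j$ replaced by the true $t_j$. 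Assumption~\ref{assump: donsker} provides asymptotic equicontinuity of the empirical process indexed by $\mathcal{F}$, while \ref{assump: true tj} gives $\rho(f_{n,\ub},f_{n,\ub}^{*}) \inPr 0$, whence $\sqrt n(P_n - P)f_{n,\ub} = \sqrt n(\Ghat_{n\eps}-C)(\ub) + o_P(1)$ uniformly in $\ub$. The deterministic piece equals $\E_{\Xb}\,C\bigl(u_1 + Z_{1n}(u_1;\Xb),\dotsc,u_d + Z_{dn}(u_d;\Xb)\bigr)$, which a first-order Taylor expansion (justified by~\ref{assump: copula1} and the smallness of $Z_{jn}$ from~\ref{assump: process Z strict}) reduces to $C(\ub) + n^{-1/2}\mathbb{B}_n(\ub) + o_P(n^{-1/2})$.

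The second step is to verify, for any $\eta > 0$, that the sets $\mathcal{A}_n$ and $\mathcal{B}$ of Section~\ref{sec: HD} can be chosen so that $\mathbb{A}_n \in \mathcal{A}_n$ and $\mathbb{B}_n$ decomposes as $\mathbb{B}_n^{\mathrm{main}} + \mathbb{B}_n^{\mathrm{rem}}$ with $\mathbb{B}_n^{\mathrm{main}} \in \mathcal{B}$ and $\sup_{\ub}|\mathbb{B}_n^{\mathrm{rem}}(\ub)| = o_P(1)$, all with probability at least $1 - \eta$. Weak convergence of $\sqrt n(\Ghat_{n\eps}-C)$ to a tight limit concentrated on $\mathcal{L}\cap\mathcal{C}([0,1]^d)$, which follows from the Donsker property implicit in~\ref{assump: donsker}, yields an $\mathcal{A}_n$ satisfying~\eqref{eq: as unif equi cont}--\eqref{eq: as bound and close to L}. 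For $\mathbb{B}_n$, assumption~\ref{assump: process Z strict} combined with Fubini gives
\[
 |\ZZZ_{jn}(u)| = \sqrt n\,|\E_{\Xb}Z_{jn}(u;\Xb)| \leq \Es M(\Xb)\cdot r(u)\,O_P(1) + o_P(1),
\]
uniformly in $u \in [\epsilon/\sqrt n,\,1-\epsilon/\sqrt n]$. Bounding the $O_P(1)$ factor by a deterministic constant $K$ on an event of probability $\geq 1-\eta/2$, the principal contribution fits into $\mathcal{B}_1$ with threshold function $K\,\Es M(\Xb)\cdot r(u)$, and the additive $o_P(1)$ piece enters $\mathbb{B}_n^{\mathrm{rem}}$ and is absorbed into $\mathbb{A}_n$ (it is uniformly small and trivially close to $0 \in \mathcal{L}$).

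Applying Theorem~\ref{thm HD of copula fctional} with $t_n = n^{-1/2}$ then yields, uniformly in $\ub \in [0,1]^d$,
\begin{align*}
 \sqrt n(\Chat_n - C)(\ub) &= \Phi'_C(\mathbb{A}_n)(\ub) + \Phi'_C(\mathbb{B}_n^{\mathrm{main}})(\ub) + o_P(1),\\
 \sqrt n(C_n^{(or)} - C)(\ub) &= \Phi'_C(\mathbb{A}_n)(\ub) + o_P(1),
\end{align*}
the second expansion obtained by running the theorem with $\htilde\equiv 0$. Subtracting and invoking the identity $\Phi'_C(\mathbb{B}_n^{\mathrm{main}})(\ub) = -\sum_k C^{(k)}(\ub)\sum_{j\neq k}C^{(j)}(\ub^{(k)})\,\ZZZ_{jn}^{\mathrm{main}}(1)$ derived in~\eqref{eq: dPhi at Bn diminishes}, together with $\ZZZ_{jn}^{\mathrm{main}}(1) = o_P(1)$ (inherited from $r(1)=0$ in~\eqref{eq: r is diminishing at borders}), completes the proof. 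The main obstacle I anticipate is the first step: producing the Taylor expansion uniformly all the way to the boundary of $[0,1]^d$, where $C^{(j)}$ need not extend continuously and $Z_{jn}(u;\xb)$ is only controlled by $r(u)/\sqrt n$, will require either a truncation of $u$ away from the faces of the cube (paying a negligible price via Remark~\ref{rem: asympt diminish at borders}) or a direct comparison argument that exploits the vanishing of $r$ at the endpoints.
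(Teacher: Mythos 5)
Your proposal is correct and follows essentially the same route as the paper: establish the representation $\Ghat_{n\hateps}=C+n^{-1/2}\AAA_n+n^{-1/2}\BBB_n$ via the empirical-process split $(P_n-P)f_{n,\ub}+Pf_{n,\ub}$ under \ref{assump: donsker}--\ref{assump: true tj}, fit $\AAA_n$ and $\BBB_n$ into $\mathcal{A}_n$ and $\mathcal{B}$ using \ref{assump: process Z strict}, and invoke Theorem~\ref{thm HD of copula fctional} with $t_n=n^{-1/2}$ (the paper identifies the resulting limit with the oracle expansion by citing Segers' Proposition~3.1 rather than re-applying the theorem with $\htilde\equiv 0$, and note that the theorem's conclusion already omits the $\htilde$-contribution, so your separate step killing $\Phi'_C(\mathbb{B}_n^{\mathrm{main}})$ is redundant though harmless). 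The boundary obstacle you flag at the end is real and is resolved in the paper exactly as you anticipate: one first shows directly, from monotonicity of the marginal empirical cdfs and the Lipschitz property of copulas, that restricting to $\big[\epsilon/\sqrt{n},1-\epsilon/\sqrt{n}\big]^d$ costs only $O(\epsilon)$ uniformly, and the Taylor remainder for $\widetilde{\mathbb{B}}_n-\mathbb{B}_n$ is then controlled by splitting on $u_j$ near versus away from $\{0,1\}$ and using $r(u)\to 0$ at the endpoints together with uniform continuity of $C^{(j)}$ on compacta.
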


\begin{theorem} \label{thm for nonparam adjustment}
Suppose that \ref{assump: donsker}, 
\ref{assump: true tj}, \ref{assump: copula2}, 
and \ref{assump: process Z less strict}  
are satisfied. Then for each $\epsilon > 0$
\begin{equation*} %
 \sup_{\ub \in \big[ \frac{\epsilon}{n^{\vartheta/2}}, 1 -  \frac{\epsilon}{n^{\vartheta/2}}\big]^{d}} \sqrt{n}\,\big|\Chat_{n}(\ub) 
 -  C_{n}^{(or)}(\ub) \big| = o_{P}(1).    
\end{equation*}
\end{theorem}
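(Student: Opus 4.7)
The plan is to parallel the proof of Theorem~\ref{thm for param adjustment}, but to invoke the generalized differentiability of Theorem~\ref{thm HD of copula fctional second} in place of Theorem~\ref{thm HD of copula fctional}; the stricter smoothness~\ref{assump: copula2} precisely compensates for the slower, only $o_P(n^{-1/4-\gamma})$, rate of the covariate-adjustment term $\mathbb{B}_n$, at the price of restricting to the shrinking hyperrectangle $\tilde{I}_n(\epsilon)$.

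First I would establish the representation~\eqref{eq: reprentation of Geps} in the present setting, i.e.\ uniformly for $\ub\in\tilde{I}_n(\epsilon)$,
\begin{equation*}
\Ghat_{n\epshat}(\ub)=C(\ub)+n^{-1/2}\mathbb{A}_n(\ub)+n^{-1/2}\mathbb{B}_n(\ub)+o_P(n^{-1/2}),
\end{equation*}
with $\mathbb{A}_n,\mathbb{B}_n$ as in~\eqref{eq: An typical}--\eqref{eq: Bn typical}. Monotonicity of $\that_j(\cdot;\xb)$ from~\ref{assump: true tj} allows rewriting $\ind\{\epshat_{ji}\leq \widetilde{F}_{nj\eps}^{-1}(u_j)\}=\ind\{U_{ji}\leq u_j+Z_{jn}(u_j;\Xb_i)\}$ with $U_{ji}=F_{j\eps}(\eps_{ji})$; assumption~\ref{assump: donsker} together with the consistency of $\that_j$ in~\ref{assump: true tj} then reduces the centered empirical part to $n^{-1/2}\mathbb{A}_n(\ub)+o_P(n^{-1/2})$. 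The conditional expectation, obtained via Sklar's theorem and independence of $\epsb$ and $\Xb$, equals $C(u_1+Z_{1n}(u_1;\Xb_i),\dotsc,u_d+Z_{dn}(u_d;\Xb_i))$; a second-order Taylor expansion using~\ref{assump: copula2} then yields $\mathbb{B}_n$, the remainder being uniformly $o_P(n^{-1/2})$ by the weighted bound in~\ref{assump: process Z less strict} together with the moment bound $\E M^s(\Xb)<\infty$.

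Next, set $t_n=n^{-1/2}$ and choose a deterministic $\ttilde_n\to 0$ with $\ttilde_n=o(n^{-1/4-\gamma})=o(t_n^{1/2+2\gamma})$, yet large enough that, by~\ref{assump: process Z less strict},
\begin{equation*}
\sup_j\,\sup_{u\in\tilde{I}_{1n}(\epsilon)}\frac{|\E_{\Xb}Z_{jn}(u;\Xb)|}{\ttilde_n\,u^\alpha(1-u)^\alpha}<2M
\end{equation*}
with probability tending to one; concretely, take $\ttilde_n$ to be the geometric mean of $n^{-1/4-\gamma}$ with a deterministic dominator of the $o_P$ sequence in~\ref{assump: process Z less strict}. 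Then $\htilde_n(\ub):=n^{-1/2}\mathbb{B}_n(\ub)/\ttilde_n=\sum_j C^{(j)}(\ub)\,\E_{\Xb}Z_{jn}(u_j;\Xb)/\ttilde_n$ lies in $\mathcal{B}_n^\alpha$ with probability tending to one, the compatibility condition~\eqref{eq: assump on gamma and vartheta} matches~\eqref{eq: gamma}--\eqref{eq: vartheta}, and $\mathbb{A}_n$ lies in a suitable $\mathcal{A}_n$ with probability tending to one (cf.~\eqref{eq: An in Kn}). Theorem~\ref{thm HD of copula fctional second} then gives, uniformly on $\tilde{I}_n(\epsilon)$,
\begin{equation*}
\sqrt{n}\bigl(\Chat_n(\ub)-C(\ub)\bigr)=\Phi'_C(\mathbb{A}_n)(\ub)+o_P(1);
\end{equation*}
applying the classical Hadamard differentiability (e.g.\ Theorem~\ref{thm HD of copula fctional} with $\htilde\equiv 0$) to the oracle $C_n^{(or)}=\Phi(\Ghat_{n\eps})$ yields the same right-hand side uniformly on $[0,1]^d$, and subtracting produces the claim.

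The main obstacle is the uniform control of the Taylor remainder in the decomposition step on the shrinking set $\tilde{I}_n(\epsilon)$: $C^{(j,k)}$ blows up like $u^{-\beta}(1-u)^{-\beta}$ near the boundary while $Z_{jn}$ satisfies only a weighted $o_P(n^{-1/4-\gamma})$ bound, and balancing the two effects is exactly the content of condition~\eqref{eq: gamma} (with the moment index $s$ entering through $\E M^s(\Xb)<\infty$). This balance also explains why the conclusion lives only on $\tilde{I}_n(\epsilon)$ rather than on all of $[0,1]^d$.
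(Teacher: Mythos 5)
Your proposal is correct and follows essentially the same route as the paper's proof: the decomposition of $\Ghat_{n\epshat}$ into the oracle part $\AAA_{n}$ (via the asymptotic equicontinuity from \ref{assump: donsker}) plus the drift $\mathbb{B}_{n}$ obtained by a Taylor expansion of $C$ at the perturbed arguments $u_j+Z_{jn}(u_j;\Xb)$, the truncation on $M(\Xb)$ using $\Es[M(\Xb)]^{s}<\infty$ to balance the blow-up of $C^{(j,k)}$ against the weighted bound in \ref{assump: process Z less strict}, and then Theorem~\ref{thm HD of copula fctional second} with $t_n=n^{-1/2}$, $h=\AAA_n$ and $\htilde=(t_n/\ttilde_n)\mathbb{B}_n$, followed by comparison with the oracle representation. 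Your choice of $\ttilde_n$ as a deterministic intermediate rate strictly $o(t_n^{1/2+2\gamma})$ dominating the $o_P$ sequence is in fact a slightly more careful bookkeeping than the paper's literal choice $\ttilde_n=n^{-(1/4+\gamma)}$, but it changes nothing of substance.
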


Note that although Theorem~\ref{thm for nonparam adjustment} does not 
guarantee the asymptotic equivalence of $\Chat_{n}$ and $C_{n}^{(or)}$ 
on the whole $d$-dimensional unit cube $[0,1]^{d}$, the important thing is that 
$\big[ \frac{\epsilon}{n^{\vartheta/2}}, 1 -  \frac{\epsilon}{n^{\vartheta/2}}\big]^{d}$ 
is expanding to $[0,1]^{d}$. This is often enough to prove 
for instance the asymptotic equivalence of moment-like estimators based 
on estimated pseudo-observations ($\epshat_{ji}$) and estimators based on the unobserved   $\eps_{ji}$. See Section~4.3 of \cite{cote_genest_omelka_2019} where it is shown that  
even a weaker result is sufficient for some type of inference. 

If $\vartheta = 1$, that is if $\gamma$ and $\alpha$ are `sufficiently large' 
then one gets the stronger statement of Theorem~\ref{thm for param adjustment}. 
This is formulated in the following corollary.

\begin{corollary} \label{cor for nonparam adjustment}
 Suppose that the assumptions of Theorem~\ref{thm for nonparam adjustment} are satisfied. If either $\beta = 0$ or 
$4\gamma + 2 \alpha \,\frac{s}{s-1} \geq 1$ then $\eqref{eq: equiv of Cn and oracle Cn}$ holds. 
\end{corollary}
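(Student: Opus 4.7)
The plan is to deduce Corollary~\ref{cor for nonparam adjustment} directly from Theorem~\ref{thm for nonparam adjustment} by first showing that the additional hypothesis forces the constant $\vartheta$ defined in~\eqref{eq: vartheta} to equal~$1$, and then covering the boundary $[0,1]^{d}\setminus[\epsilon/\sqrt{n},\,1-\epsilon/\sqrt{n}]^{d}$ by a Lipschitz-type estimate on empirical copulas. When $\beta=0$ this is immediate, because the first bullet of~\ref{assump: process Z less strict} is already formulated on $[\epsilon/\sqrt{n},\,1-\epsilon/\sqrt{n}]$, so $\vartheta=1$ tautologically. When $\beta>0$, I would combine the assumption $4\gamma+2\alpha\,\tfrac{s}{s-1}\ge 1$ with the constraint~\eqref{eq: gamma} on~$\gamma$ to obtain, after clearing denominators in the numerator of~\eqref{eq: vartheta}, the inequality $\tfrac{s-2}{s-1}+4\gamma\,\tfrac{s}{s-1}\ge 2(1-\alpha)$, i.e.\ $\vartheta=1$. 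With $\vartheta=1$, Theorem~\ref{thm for nonparam adjustment} delivers, for every $\epsilon>0$,
\begin{equation*}
\sup_{\ub\in [\epsilon/\sqrt{n},\,1-\epsilon/\sqrt{n}]^{d}}\sqrt{n}\,\bigl|\Chat_{n}(\ub)-C_{n}^{(or)}(\ub)\bigr|=o_{P}(1).
\end{equation*}

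For the boundary extension I would use the Lipschitz-type property that every empirical copula satisfies $|\Chat_{n}(\ub)-\Chat_{n}(\ub')|\le\|\ub-\ub'\|_{1}+O(1/n)$ uniformly in $\ub,\ub'\in[0,1]^{d}$ (because each marginal empirical cdf has only jumps of size $1/n$), and analogously for $C_{n}^{(or)}$. Let $\ub^{\circ}$ be the coordinatewise projection of $\ub$ onto $[\epsilon/\sqrt{n},\,1-\epsilon/\sqrt{n}]^{d}$, so that $\|\ub-\ub^{\circ}\|_{1}\le d\,\epsilon/\sqrt{n}$. Then
\begin{equation*}
\sqrt{n}\,\bigl|\Chat_{n}(\ub)-C_{n}^{(or)}(\ub)\bigr|\le \sqrt{n}\,\bigl|\Chat_{n}(\ub^{\circ})-C_{n}^{(or)}(\ub^{\circ})\bigr|+2d\,\epsilon+O(1/\sqrt{n})
\end{equation*}
uniformly in $\ub\in[0,1]^{d}$. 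Taking the supremum over $\ub\in[0,1]^{d}$ and applying the interior estimate above yields
\begin{equation*}
\sup_{\ub\in[0,1]^{d}}\sqrt{n}\,\bigl|\Chat_{n}(\ub)-C_{n}^{(or)}(\ub)\bigr|\le o_{P}(1)+2d\,\epsilon+o(1),
\end{equation*}
and letting $\epsilon\downarrow 0$ proves~\eqref{eq: equiv of Cn and oracle Cn}.

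The main obstacle is the algebraic step: reading off $\vartheta=1$ from~\eqref{eq: vartheta} is delicate because the numerator involves two different ratios $\tfrac{s-2}{s-1}$ and $\tfrac{s}{s-1}$, and the case $\beta>\alpha$, where the constraint~\eqref{eq: gamma} on $\gamma$ becomes nontrivial, has to be handled carefully to ensure that $\gamma$ is large enough to push $\vartheta$ up to~$1$. By contrast, the boundary step is completely standard copula bookkeeping and uses nothing from~\ref{assump: process Z less strict} beyond the basic definition of an empirical copula.
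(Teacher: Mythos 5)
Your overall route is the same as the paper's: the corollary is obtained by observing that the extra hypothesis forces $\vartheta=1$, so that Theorem~\ref{thm for nonparam adjustment} already gives the statement on $J_{n}(\epsilon)=[\epsilon/\sqrt{n},1-\epsilon/\sqrt{n}]^{d}$, and then disposing of the boundary strips. Your boundary step --- projecting onto $J_{n}(\epsilon)$ and using that $\Chat_{n}$ and $C_{n}^{(or)}$ are coordinatewise Lipschitz up to an $O(1/n)$ discretization error --- is correct and is essentially the reduction the paper already carries out in Appendix~B.1 (there the authors compare $\ub$ with its modification in one coordinate at a time, which amounts to the same bookkeeping); the paper's one-sentence proof of the corollary implicitly relies on that reduction, so nothing substantive is added or missing on that side.

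The step you yourself flag as delicate, however, does not go through as described. Clearing denominators in \eqref{eq: vartheta}, the condition $\vartheta=1$, i.e.\ $\tfrac{s-2}{s-1}+4\gamma\,\tfrac{s}{s-1}\ge 2(1-\alpha)$, is (after multiplying by $(s-1)/s$) equivalent to
\[
 4\gamma+2\alpha\,\tfrac{s-1}{s}\;\ge\;1,
\]
\emph{not} to the corollary's hypothesis $4\gamma+2\alpha\,\tfrac{s}{s-1}\ge 1$. Since $\tfrac{s}{s-1}>\tfrac{s-1}{s}$ for finite $s\ge 2$, the stated hypothesis is strictly weaker, and \eqref{eq: gamma} does not close the gap: take $\alpha=\beta=\tfrac12$, $\gamma=0$, $s=2$; then \eqref{eq: gamma} is vacuous, $4\gamma+2\alpha\,\tfrac{s}{s-1}=2\ge1$, yet the numerator of \eqref{eq: vartheta} vanishes, so $\vartheta=0$ and the interior estimate only holds on a fixed $[\epsilon,1-\epsilon]^{d}$, where the Lipschitz boundary argument no longer yields an $o(1)$ error at the $\sqrt{n}$ scale. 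To be fair, the paper's own proof asserts the identical implication without computation, so the defect is inherited rather than introduced: either the condition should read $4\gamma+2\alpha\,\tfrac{s-1}{s}\ge 1$ (equivalently $4\gamma+2\alpha-\tfrac{2\alpha}{s}\ge 1$, which does reduce to $4\gamma+2\alpha\ge1$ as $s\to\infty$), or the fractions in \eqref{eq: vartheta} are intended differently. Actually carrying out the ``delicate algebraic step'' you postponed would have surfaced this; as written, the inequality you claim to obtain is precisely the one that fails.
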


Note that one can view \ref{assump: process Z less strict} as a more 
general version of assumption (\textbf{Yn}) from \cite{ogv_SCAJS_2015}. 
Note that (\textbf{Yn}) does not only require that $\alpha=\beta=\frac{1}{2}$ 
but more importantly it requires a bounded covariate (i.e. $s=\infty$).

\section{Some specific examples} \label{sec: applications}
In this section we illustrate how the results presented in this paper 
can be used in the specific models. For simplicity of presentation we 
start with the linear model with iid errors (Section~\ref{subsec: lin model}). 
This model will be then generalized to $\beta$-mixing errors 
(Section~\ref{subsec: linear model with beta mix}) or functional 
linear model (Section~\ref{subsec: functional lin model}). The section 
is concluded by the application to location-shape-scale models (Section~\ref{subsec: location scale shape models}).

\subsection{Linear model with iid errors} \label{subsec: lin model}
Consider the standard linear model for each of the marginals (with the same $k$-dimensional covariates $\Xb_1,\dotsc,\Xb_n$), i.e. 
\begin{equation} \label{eq: linear model}
 Y_{ji} = \Xb_{i} \tr \bb_{j} + \eps_{ji}, \quad i=1,\dotsc,n, \quad j\in\{1,\dotsc,d\}, 
\end{equation}
where the vector of centred errors $\epsb_{i}=(\eps_{1i},\dotsc,\eps_{di})\tr$ is independent of $\Xb_{i}$. 
Thus one can take simply $t_j(y;\xb)=y - \xb \tr \bb_{j}$ and $\widehat t_j(y;\xb)= y - \xb \tr \widehat{\bb}_{j}$, 
where $\widehat{\bb}_{j}$ is for instance least squares estimator of $\bb_{j}$. 
Then under mild assumptions (including among others that $\E\, \|\Xb_{1}\|^2 < \infty$ with 
 $\|\cdot\|$ being the Euclidean norm) $\widehat{\bb}_{j}$ is $\sqrt{n}$-consistent. Further, for some $\delta > 0$, 
 one can define
\[
\mathcal{F}_j=\big\{(y,\xb)\mapsto \ind\{y - \xb\tr\widetilde{\bb}  \leq z\}\,;\, z\in\RR,\; 
\widetilde{\bb} \in \RR^{k} ,\,
   \|\widetilde{\bb}-\bb_{j}\|\leq \delta \big\} ,  
\]
which is $P_j$-Donsker ($j \in \{1, \dots,d\}$) and thus  \ref{assump: donsker} is satisfied by Remark \ref{remark: Donsker}.
Further note that 
\[
Z_{jn}(u; \Xb) = F_{j\varepsilon} \big(\widetilde{F}_{nj\varepsilon}^{-1}(u)+
 \Xb\tr(\widehat{\bb}_{j} - \bb_{j})\big) - u, 
 \quad u \in [0,1]. 
\]
So with the help of the mean value theorem
\[
 Z_{jn}(u; \Xb) =  F_{j\varepsilon} \big(\widetilde{F}_{nj\varepsilon}^{-1}(u)\big) - u
  + f_{j\eps}(\xi_{n,u}^{\Xb}) \Xb\tr(\widehat{\bb}_{j} - \bb_{j}), 
\]
with $\xi_{n,u}^{\Xb}$ between $\widetilde{F}_{nj\varepsilon}^{-1}(u)$ and 
$\widetilde{F}_{nj\varepsilon}^{-1}(u) + \Xb\tr(\widehat{\bb}_{j} - \bb_{j})$. 
Thus with the help of~\eqref{eq: tilde Fnjeps close to Fjeps} 
it is straightforward to show that \ref{assump: process Z strict} 
holds provided that there exists (a version of the) density~$f_{j\eps}$ which is bounded 
and satisfies
\begin{equation*} %
 \lim_{u \to 0_{+}} f_{j\eps}\big(F_{j\eps}^{-1}(u)\big) 
 = 0 = \lim_{u \to 1_{-}} f_{j\eps}\big(F_{j\eps}^{-1}(u)\big).  
\end{equation*}

On the other hand the straightforward application of the Hadamard differentiability result of 
\cite{bucher2013empirical} would require (among others) the weak convergence of the 
process 
\[
\ZZZ_{jn}(u) = \sqrt{n}\,\E_{\Xb}\big[F_{j\varepsilon} \big(\widetilde{F}_{nj\varepsilon}^{-1}(u)+
 \Xb\tr(\widehat{\bb}_{j} - \bb_{j})\big)  -u \big], 
 \quad u \in [0,1], 
\]
which is a more delicate task. Moreover to show this weak convergence 
it seems to be necessary to add the assumption of \textbf{the continuity} of 
$f_{j\eps}\big(F_{j\eps}^{-1}(u)\big)$ which is not required by our approach.

\subsection{Linear regression with \texorpdfstring{$\beta$}{beta}-mixing observations} \label{subsec: linear model with beta mix}

Note that Theorem \ref{thm for param adjustment} gives conditions to obtain asymptotic equivalence of the residual-based empirical copula process and the one based on true errors even in models with dependent observations. Assume observations $\binom{\Yb_{1}}{\Xb_{1}},\dotsc,\binom{\Yb_{n}}{\Xb_{n}}$ from a strictly stationary $\beta$-mixing sequence $\binom{\Yb_{i}}{\Xb_{i}}$, $i\in\mathbb{Z}$,  fulfilling the linear model (\ref{eq: linear model}), where the errors $\epsb_{i}=(\eps_{1i},\dots,\eps_{di})\tr$ are independent of past and present covariates $\Xb_\ell$, $\ell\leq i$. For the $\beta$-mixing coefficients we assume $\beta_i=O(i^{-b})$ for some $b>1$.

Provided that consistent estimators $\widehat{\bb}_1,\dotsc,\widehat{\bb}_d$ are available  
it is sufficient to  verify condition \ref{assump: donsker} for the function class
\begin{multline*} 
 \mathcal{F}=\Big\{(\yb,\xb)\mapsto \ind\{y_1 - \xb\tr\widetilde{\bb}_1  \leq z_1,\dotsc, y_d - \xb\tr\widetilde{\bb}_d  \leq z_d\}\,;\, z_1,\dotsc,z_d\in\RR,\; 
 \\
 \forall_{j\in\{1,\dotsc,d\}}\  \widetilde{\bb}_j \in \RR^{k}\, \&\,
   \|\widetilde{\bb}_{j}-\bb_{j}\|\leq \delta \Big\}  
\end{multline*}
for some $\delta > 0$. 
To do that we will follow the approach of  \cite{dedecker_louhichi} 
and consider the seminorm
\begin{equation*}  
 \|f\|_{2,\beta}^{2} = \int_{0}^{1} \beta^{-1}(u)\,Q_{f}^{2}(u)\,du, 
\end{equation*}
where 
$ \beta^{-1}(u) = \inf\{x>0 :\beta_{\lfloor x\rfloor}\leq u\}$ and $  
 Q_{f}(u) = \inf\big\{x>0: \PP\big(|f(\Yb_{1}, \Xb_{1})| > x \big) \leq u \big\}$.
 In \cite{noh_copula_nts_2019} (see their formula (A.13) in section A2) 
 it was derived that there exists a finite constant $K$ such that 
\begin{equation}\label{eq: semi-norm-beta}
\| f-g\|_{2,\beta}^2\leq \frac{K\,b}{b-1} (P|f-g|)^{(b-1)/b}
\end{equation}
for all indicator functions $f$ and $g$. Denote $\|\cdot\|_2$ the $L^2(P)$-norm.  
Then similarly as in the proof of Lemma~1 of \cite{dette2009goodness} one can show 
that the bracketing integral condition
\begin{equation}\label{eq: bracketing-beta}
\int_{0}^\infty \sqrt{\log N_{[\,]}(\epsilon, \mathcal{F},\|\cdot\|_{2,\beta}})\, d\epsilon<\infty
\end{equation}
is fulfilled when for each $j \in \{1,\dotsc,d\}$ 
\[
 \int_{0}^1 \sqrt{\log N_{[\,]}(\epsilon^{2b/(b-1)}, \mathcal{M}_j,\|\cdot\|_{2}})\, d\epsilon<\infty,  
\]
where 
\[
\mathcal{M}_j=\big\{\xb\mapsto \xb\tr (\widetilde{\bb}-\bb_{j})\, ;\, 
\widetilde{\bb} \in \RR^{k}\, \&\,  \|\widetilde{\bb} -\bb_{j}\|\leq \delta \big\}. 
\]
But the bracketing number $N_{[\,]}\big(\epsilon^{2b/(b-1)}, \mathcal{M}_j,\|\cdot\|_{2}\big)$ 
is of order $\epsilon^{-2bd/(b-1)}$ by applying Theorem 2.7.11 in \cite{vaart-wellner-2007}. 
Thus the bracketing 
integral~\eqref{eq: bracketing-beta} is finite which implies asymptotic equicontinuity of the empirical process indexed in $\mathcal{F}$ with respect to the semi-norm $\|\cdot\|_{2,\beta}$ 
\citep[see Section~4.3 of][]{dedecker_louhichi}.  Now using once more the inequality in 
\eqref{eq: semi-norm-beta} yields that \ref{assump: donsker} holds. Further \ref{assump: true tj} follows from consistency of the estimator for the regression function. 

Now condition \ref{assump: process Z strict} can be verified similarly as in Section~\ref{subsec: lin model} provided that the estimator $\widehat{\bb}_j$  is $\sqrt{n}$-consistent. For example for the least squares estimator this is a simple consequence of the law of large numbers and central limit theorem for $\beta$-mixing sequencies and is fullfilled under   existence of $m>2$ moments of covariates and errors if $b>m/(m-2)$ \citep[see e.g.\ Proposition 2.8 and Theorem~2.21 in][]{fan_yao_book2005}.

\begin{remark}
For simplicity here (as well in Section~\ref{subsec: lin model}) we consider 
only linear models so far. But analogously one can consider nonlinear or even non- or semiparametric regression models provided that suitable regression estimators are available. See for instance \cite{ogv_SCAJS_2015} and \cite{noh_copula_nts_2019} where nonparametric location-scale models were 
considered. 
\end{remark}

\subsection{Functional linear model} \label{subsec: functional lin model}

Now assume that  $\binom{\Yb_{1}}{X_{1}},\dotsc,\binom{\Yb_{n}}{X_{n}}$  is a random sample 
from the generic distribution $\binom{\Yb}{X}$, where $X$ is a functional covariate such that 
$$
 Y_{j}= \langle X,b_{j}\rangle + \varepsilon_{j},\quad j\in\{1,\dotsc,d\},  
$$
and the error vector $\epsb=(\eps_{1},\dotsc,\eps_{d})\tr$ is independent of~$X$.

Suppose that the covariate $X$ as well as the true parameter function 
$b_{j}$ are random elements of the Hilbert space
$L^2([0,1])$ with the inner product $\langle f,g\rangle=\int_0^1 f(t)g(t)\,dt$ and norm $\|f\|_2=\langle f,f\rangle^{1/2}$. For simplicity assume $X\geq 0$.

Assume that for each $j$  there is an estimator $\widehat{b}_j$ based on an iid sample $\binom{Y_{ji}}{X_{i}}$,  %
$i=1,\dotsc,n$, such that
\begin{equation}\label{rate beta}
\| \widehat{b}_j-b_{j}\|_2=o_P(n^{-1/4+\gamma}) 
\end{equation}
for some $\gamma\geq 0$ corresponding to $\gamma$ in assumption \ref{assump: process Z less strict}.
Convergence rates for estimators in the functional linear model can be found in \cite{hall2007methodology}, \cite{yuan2010reproducing} 
or \cite{shang2015nonparametric}, 
among others. For example, under the assumption that $b_{j}$ is an element of the univariate Sobolev-space $\mathcal{W}_2^m([0,1])$ for some $m\geq 1$, condition (\ref{rate beta}) 
is fulfilled for the regularized estimators in  \cite{yuan2010reproducing} under the assumptions of their Corollary 11.

We further assume that $\PP\big(\,\widehat{b}_j-b_{j}\in\mathcal{G}\big)\to 1$ as $n\to\infty$ for a function class $\mathcal{G}$ such that the bracketing number fulfills
\begin{equation}\label{eq: bracketing of G}
 \log N_{[\,]}(\mathcal{G},\epsilon,\|\cdot\|_2)\leq \frac{K}{\epsilon^{1/k}} 
\end{equation}
for some $K>0$ and $k>1$. For example for $k=2$ this is satisfied with the Sobolev unit ball given by  
\[
\mathcal{G}=\big\{b\in \mathcal{W}_2^{2}([0,1]): \|b\|_2+\|b^{(2)}\|_2\leq 1\big\}, 
\] 
where $b^{(2)}$ stands for the second derivative of~$b$. 
Note that $\|\cdot\|_2$-bracketing numbers can be bounded by $\|\cdot\|_\infty$-covering numbers, such that Corollary 4.3.38 
in \cite{gine2021mathematical}
can be applied. Similar results can be found in Example 19.10 in 
\cite{vaart2000asymptotic} 
or Corollary 4 in \cite{nickl2007bracketing}. 
We obtain $\PP\big(\,\widehat{b}_j-b_{j}\in\mathcal{G}\big)\to 1$ for $\mathcal{G}$ as above for example under the assumption $b_{j}\in\mathcal{W}_2^{m}([0,1])$ for some $m>2$ for the estimator in \cite{yuan2010reproducing} also chosen from $\mathcal{W}_2^{m}([0,1])$ under the assumptions of their Corollary 11, because then $\|\widehat{b}_j-b_{j}\|_2+\|\widehat{b}^{(2)}_j-b_{j}^{(2)}\|_2\to 0$. Other estimators and subspaces $\mathcal{G}$ of $L^2([0,1])$ could be used as well.

To derive conditions under which assumption \ref{assump: donsker} is valid introduce 
\[
 t_j(Y_j;X) = Y_j - \langle X,b_{j}\rangle, 
\qquad 
\widehat{\eps}_j = \widehat{t}_j(Y_j;X) = Y_j - \langle X,\widehat{b}_j\rangle 
\]
and note that 
\[
\mathcal{F}_j =\big\{(y,x)\mapsto \ind\{y - \langle x,b\rangle \leq z \}  
\, ;\, z\in\mathbb{R}, b-b_{j}\in\mathcal{G} \big\}. 
\]
To show that this function class is Donsker we derive an upper bound for the bracketing number. To this end let $\epsilon>0$ and let $[b_i^L,b_i^U]$, $i=1,\dotsc,N(\epsilon)=O(\exp(\epsilon^{-2/k}))$ be brackets for $\mathcal{G}$ of $\| \cdot\|_2$-length $\epsilon^2$ (see assumption (\ref{eq: bracketing of G})). 
Note that for $x\geq 0$ and for $b - b_{j}$ from the bracket 
$[b_i^L,b_i^U]$ we obtain that the indicator function $\ind\{y - \langle x,b\rangle \leq z \}$ is contained in the bracket 
\[
\big[\,\ind\{y - \langle x,b_{j} + b_i^L\rangle \leq z \},\, 
\ind\{y - \langle x,b_{j} + b_i^U\rangle \leq z \}
\big]
\]
for each $z \in\mathbb{R}$. Further the above  
bracket has $L^2(P_{j})$-length (where $P_{j}$ denotes the distribution of 
$\binom{Y_j}{X}$) bounded by
\begin{align*}
\Big(\Es & \big[\ind\{Y_j - \langle X,b_{j}\rangle \leq z  +\langle X,b_i^U\rangle\}
- \ind\{Y_j - \langle X,b_{j}\rangle \leq z + \langle X,b_i^L\rangle\}\big]^2\Big)^{1/2}\\
&\leq \left( \Es\big[F_{j\varepsilon}(z+\langle X,b_i^U\rangle)-F_{j\varepsilon}(z+\langle X,b_i^L\rangle)\big] \right)^{1/2}\\
&\leq \left(\|f_{j\varepsilon}\|_\infty \Es\big[\langle X,b_i^U-b_i^L\rangle\big]\right)^{1/2}
\leq \left(\|f_{j\varepsilon}\|_\infty \Es\| X\|_{2}\,\epsilon^2\right)^{1/2} =O(\epsilon),
\end{align*}
where the last line follows from the Cauchy-Schwarz inequality  
provided that one assumes a bounded density $f_{j\varepsilon}$ and $\Es\| X\|_2<\infty$. 
 Similar to the proof of Lemma 1 in \cite{akritas_keilegom_2001} 
 from this one obtains an upper bound $O(\epsilon^{-2}\exp(\epsilon^{-2/k}))$ for the  $L^2(P_{j})$-bracketing number of the class $\mathcal{F}_j$. Thus $\mathcal{F}_j$ is Donsker by the bracketing integral condition in Theorem~19.5 of \cite{vaart2000asymptotic} for $k > 1$ and  \ref{assump: donsker} is fulfilled by Remark~\ref{remark: Donsker}. 

Further note that by the mean value theorem and \eqref{eq: tilde Fnjeps close to Fjeps}
\[
Z_{jn}(u;X) = F_{j\varepsilon} \big(\widetilde{F}_{nj\varepsilon}^{-1}(u)+\langle X,\widehat{b}_j-b_{j}\rangle\big)-u
= f_{j\varepsilon} (\xi_{n,u}^{X})\langle X,\widehat{b}_j-b_{j}\rangle 
 + o\big(\tfrac{1}{\sqrt{n}}\big)
\]
with $\xi_{n,u}^{X}$ converging to $F_{j\varepsilon}^{-1}(u)$. Now using \eqref{rate beta} one can bound 
\[
 |Z_{jn}(u;X)|\leq f_{j\varepsilon} (\xi_{n,u}^{X})\,\|X\|_2 \, o_P(n^{-1/4+\gamma}) 
 + o\big(\tfrac{1}{\sqrt{n}}\big), 
\]
where the $o_{P}$ as well as $o$ term do not depend on $u$ and $X$.  As this term is typically larger than $O_{P}(n^{-1/2})$ one can use Theorem \ref{thm for nonparam adjustment} that requires \ref{assump: process Z less strict} (instead of more strict \ref{assump: process Z strict}).  With some further effort it possible to find appropriate moment assumptions for $\|X\|_2$ and smoothness assumption on $f_{j\varepsilon}\big(F_{j\varepsilon}^{-1}(u)\big)$ so that  \ref{assump: process Z less strict} holds.

It is worth noting that trying to show the weak convergence of the process 
\[
 \ZZZ_{jn}(u) = \sqrt{n}\,\E_{X}\big[F_{j\varepsilon} \big(\widetilde{F}_{nj\varepsilon}^{-1}(u)
 +\langle X,\widehat{b}_j-b_{j}\rangle\big)-u\big]
= \sqrt{n}\,\E_{X}\big[f_{j\varepsilon} (\xi_{n,u}^{X})\langle X,\widehat{b}_j-b_{j}\rangle \big]  
+ o\big(\tfrac{1}{\sqrt{n}}\big)
\]
could be rather tough here as the rate of convergence 
of terms like $\langle X,\widehat{b}_j-b_{j}\rangle$ is typically slower than $n^{-1/2}$ \citep[see e.g.][]{cardot2007clt, shang2015nonparametric, yeon_et_al_2023}. 

\subsection{Location-scale-shape models} \label{subsec: location scale shape models}

In this subsection we consider generalizations of location-scale models. Assume that $Y$ is a real-valued random variable with a cdf
\[
 \PP(Y\leq y)=\Psi\big(\tfrac{y-\alpha}{\beta} ;\gamma\big)
\]
with location parameter $\alpha$, scale parameter $\beta>0$ shape parameter 
$\gamma$ and a known function~$\Psi$. As an example one can consider 
the skew-normal distribution \citep{azzalini} where $\Psi$ 
is given by 
\[
\Psi_{s} (z;\gamma)= \int_{-\infty}^z 2\varphi(t)\Phi(\gamma t)\, dt\\
\]
Alternatively one can consider for instance epsilon-skew-normal distribution \citep{mudholkar-hutson} 
or  generalized normal distribution \citep[][]{nadarajah2005generalized}. 

Now consider our model with observations $(Y_{ji},\Xb_i)$, $i=1,\dotsc,n$, where for each $j\in\{1,\dotsc,d\}$, the conditional distribution of $Y_{ji}$, given $\Xb_i=\xb$, is of the above form with parameters depending on the covariate, i.e. 
$$
F_{j\xb}(y)=\PP\big(Y_{ji}\leq y\,|\,\Xb_i=\xb\big)=\Psi_{j}\left(\frac{y-\alpha_j(\xb)}{\beta_j(\xb)} ;\gamma_j(\xb)\right), 
$$
where $\Psi_{j}$ is know but possibly different for $j \in \{1,\dotsc,d\}$.

Set $t_j(y;\xb)=F_{j\xb}(y)$, such that each $\varepsilon_{ji}=t_j(Y_{ji};\Xb_i)$ is uniformly distributed on $[0,1]$ and thus independent of $\Xb_i$. 
Further, assume that also the random vector $\epsb_{i}=(\eps_{1i},\dotsc,\eps_{di})\tr$ is independent of $\Xb_i$. Then  the the joint cdf of 
$\epsb_{i}$ coincides with the copula function~$C$ and can 
be estimated by $\widehat C_n$ as in (\ref{eq: definition C_n}) based on pseudo-observations
$$
 \widehat\varepsilon_{ji}=\widehat t_j(Y_{ji};\Xb_i)
=\Psi_j\left(\frac{Y_{ji}-\widehat\alpha_j(\Xb_i)}{\widehat\beta_j(\Xb_i)} ;\widehat\gamma_j(\Xb_i)\right)
$$
 if consistent estimators $\widehat\alpha_j,\widehat\beta_j,\widehat\gamma_j$ for the parameter functions are available.

 In the models without covariates parameters are typically estimated with method of moments or maximum likelihood and are $\sqrt{n}$-consistent with asymptotic normal distribution under regularity assumptions. To the authors' knowlege estimators for covariate-dependent parameters have not yet been investigated in the literature. Thus to obtain asymptotic results for the copula estimator it is desirable to require only weak assumptions on the parameter function estimators. One possibility to show assumption  \ref{assump: donsker} is to find estimators and function classes 
$\mathcal{F}_j^{(\ell)}$, $\ell \in \{1,2,3\}$, such that 
$\PP\big(\widehat\alpha_j\in \mathcal{F}_j^{(1)}\big)\to 1$, 
$\PP\big(\widehat\beta_j\in \mathcal{F}_j^{(2)}\big)\to 1$, 
$\PP\big(\widehat\gamma_j\in \mathcal{F}_j^{(3)}\big)\to 1$ 
and the corresponding empirical process indexed by 
\[
\mathcal{F}_j=\Big\{(y,\xb)\mapsto \ind\big\{y\leq \alpha(\xb) + \beta(\xb)\Psi_j^{-1}\big(z;\gamma(\xb)\big)\big\}\,;\, z\in\mathbb{R}, \alpha\in \mathcal{F}_j^{(1)},\beta\in \mathcal{F}_j^{(2)},\gamma\in \mathcal{F}_j^{(3)}\Big\}
\]
is $P_j$-Donsker, see Remark \ref{remark: Donsker}. 

Further one has to consider 
\begin{eqnarray*}
Z_{jn}(u;\Xb)&=&t_j\big(\widehat t_j^{-1}(u;\Xb);\Xb\big)-u\\
&=& \Psi_j\left(\frac{\widehat\alpha_j(\Xb)-\alpha_j(\Xb)+\widehat\beta_j(\Xb)\Psi_j^{-1}(u;\widehat\gamma_j(\Xb))}{\beta_j(\Xb)};\gamma_j(\Xb)\right)-u
\end{eqnarray*} 
and it should be easier to derive conditions to show 
either \ref{assump: process Z strict} or \ref{assump: process Z less strict} using a Taylor expansion and convergence rates of the parameter estimators than to show weak convergence of the process defined in (\ref{eq: joint process A_n Z_j}) 
with $\mathbb{Z}_{jn}(u)=\sqrt{n}\,\E_{\Xb}Z_{jn}(u;\Xb)$. The latter weak convergence would be needed without the new Hadamard differentiability result. 

Note that in this example one could estimate~$C$ also by the empirical cdf 
of $(\epshat_{1i},\dotsc,\epshat_{di})\tr$,  $i=1,\dotsc,n$, instead of the empirical copula function $\widehat{C}_{n}$. However,  the weak convergence of the process 
$\mathbb{B}_n$ would be needed to derive the limit distribution of such estimator.  
But this  can be avoided by considering the empirical copula estimator~\eqref{eq: definition C_n} and the results presented in this paper. 

\section{Conclusions and further discussions}
\label{sec: discussion}

In our paper we presented two generalizations of the Hadamard-differentiability 
of \cite{bucher2013empirical} that are motivated by dealing with the empirical copulas 
in the presence of covariates. It is worth noting that these results can be used 
in many other situations provided that appropriate results on the estimates are available. 
One can (among others) think of for instance long-range dependence where one would 
get different than $\sqrt{n}$-rates of convergence. A different 
(but rather straightforward)  
generalization would be to consider appropriately weighted differentiability result 
to get the weighted empirical copula approximation as in \cite{cote_genest_omelka_2019}.

\appendix 
\section{Proofs of the results in Section~\ref{sec: HD}} \label{appendix A}

\subsection{Proof of Theorem~\ref{thm HD of copula fctional}}

Let $C_{n}^{h,\htilde} \in \mathcal{D}_{n}^{(1)}$. 
Note that thanks to~\eqref{eq: r is diminishing at borders} 
(which implies that $\htilde_{k}(1)=0$) and~\eqref{eq: tilde hn} 
\[
 \widetilde{h}(\ub^{(j)}) 
 =  \htilde_{j}(u_{j}) + \sum_{k=1, k\neq j}^d C^{(k)}\big(\ub^{(j)}\big)\,\htilde_{k}(1)
 =  \htilde_{j}(u_{j}), 
\]
where $\ub^{(j)}$ was introduced in~\eqref{eq: ubj}. 
Further recall that $h_{j}(u_{j}) = h(\ub^{(j)})$.

Now let $U$ stand for the cdf 
of a random variable with the uniform distribution on~$[0,1]$. Then 
the marginal cdf  for the $j$-th coordinate 
of the joint cdf 
$C_{n}^{h,\htilde} = C + t_{n}\,h + t_{n}\,\widetilde{h}$ is given by 
\[
 \big(C + t_{n}\,h + t_{n}\,\widetilde{h}\big)(\ub^{(j)}) 
 = \big(U + t_{n}\,h_{j} + t_{n}\,\htilde_{j}\big)(u_{j}). 
\]
Denote its corresponding generalized inverse function as 
\begin{equation*} %
 \xi_{jn}^{h_{j},\htilde_{j}}(u) = \big(U + t_{n}\,h_{j} + t_{n}\,\htilde_{j}\big)^{-1}(u).  
\end{equation*}
For simplicity of notation introduce
\begin{equation*} %
 \xib_{n}^{h,\widetilde{h}}(\ub) = \big(\xi_{1n}^{h_{1},\widetilde{h}_{1}}(u_{1}), \dotsc,\xi_{dn}^{h_{d},\widetilde{h}_{d}}(u_{d})\big)\tr. 
\end{equation*}

Note that  applying Lemma~\ref{lemma HD of general inverse second}(i) 
(with $t_{n}=\ttilde_{n}$) given in Section~\ref{subsec HD of quantile} 
for each $j \in \{1,\dotsc,d\}$ implies that 
$\xib_{n}^{h,\widetilde{h}}(\ub) \ntoinfty \ub$ (uniformly in 
$C_{n}^{h,\htilde} \in \mathcal{D}_{n}^{(1)}$ 
and $\ub \in [0,1]^{d}$).

Now using the asymptotic uniform equicontinuity 
of $h \in \mathcal{A}_{n}$
and the form of~$\widetilde{h}$ given in~\eqref{eq: tilde hn}  one derives  
that %
 \begin{align}
\notag
  \Phi(C + t_{n}\,h + t_{n}\,\widetilde{h})(\ub) 
 &= \big(C + t_{n}\,h + t_{n}\,\widetilde{h}\big)
\big(\xib_{n}^{h,\widetilde{h}}(\ub)\big)
\\
\notag
 &= C\big(\xib_{n}^{h,\widetilde{h}}(\ub)\big) 
  + t_{n}\,h\big(\xib_{n}^{h,\widetilde{h}}(\ub)\big)
  + t_{n}\,\widetilde{h}\big(\xib_{n}^{h,\widetilde{h}}(\ub)\big) 
\\
\label{eq: expanding copula functional}
 &= C\big(\xib_{n}^{h,\widetilde{h}}(\ub)\big) 
  + t_{n}\,h(\ub)  
  + t_{n}\, \sum_{j=1}^{d} C^{(j)}(\ub)\,\htilde_{j}\big(\xi_{jn}^{h_{j},\htilde_{j}}(u_j)\big)
   + o(t_{n}),   
 \end{align}
where here (as well as in the sequel) by $o(t_{n})$ we understand a remainder term that 
may depend on~$\ub$, $h$ 
and $\widetilde{h}$ but it is uniformly asymptotically negligible, i.e.  
\[
 \sup_{C_{n}^{h,\htilde} \in \mathcal{D}_{n}^{(1)}}  
 \sup_{\ub \in [0,1]^{d}} \frac{|o(t_{n})|}{t_{n}} \ntoinfty 0. 
\]

Now we compare the quantity~$C\big(\xib_{n}^{h,\widetilde{h}}(\ub)\big)$ 
with $C(\ub)$. 
Note that by the mean value theorem  
\begin{align}
 \notag 
 \Big|C\big(\xib_{n}^{h,\widetilde{h}}(\ub)\big)
&- C(\ub) 
 - \sum_{j=1}^{d} C^{(j)}(\ub) \big[\xi_{jn}^{h_{j},\htilde_{j}}(u_{j}) - u_{j} \big]  
 \Big|  
\\ 
\label{eq: C at xi minus C at u}
 & \leq  
\sum_{j=1}^{d} \big|C^{(j)}(\ub_{n}^{h,\widetilde{h}}) - C^{(j)}(\ub) \big|\,  \big|\xi_{jn}^{h_{j},\htilde_{j}}(u_{j}) - u_{j} \big|,   
\end{align}
where $\ub_{n}^{h,\widetilde{h}}$ lies between the points 
$\xib_{n}^{h,\widetilde{h}}(\ub)$ 
and $\ub$. 

Now fix $j \in \{1,\dotsc,d\}$ and let $\delta \in (0,\tfrac{1}{4})$. Introduce 
the sets  
\begin{equation*} %
 I_{j}(\delta) = \big\{ \ub \in [0,1]^{d} : \delta \leq u_{j} \leq 1 - \delta \big\}, 
\quad  I_{j}^{c}(\delta) = [0,1]^d\setminus I_{j}(\delta). 
\end{equation*}
Then with the help of the (uniform) continuity of~$C^{(j)}$ on~$I_{j}(\delta/2)$ and 
Lemma~\ref{lemma HD of general inverse second}(ii) (with $t_n=\tilde t_n$) one gets that 
\begin{align} 
\notag 
 \sup_{C_{n}^{h,\htilde} \in \mathcal{D}_{n}^{(1)}}\, 
& \sup_{\ub \in I_{j}(\delta)}
\big\{\big|C^{(j)}(\ub_{n}^{h,\widetilde{h}}) - C^{(j)}(\ub) \big|\,  \big|\xi_{jn}^{h_j,\widetilde{h}_j}(u_{j}) - u_{j} \big| \big\}
\\
\label{eq: ineq for Cj minus Cj for uj far from edge}
\leq & \sup_{C_{n}^{h,\htilde} \in \mathcal{D}_{n}^{(1)}}\, 
\sup_{\ub \in I_{j}(\delta/2)} \big|C^{(j)}(\ub_{n}^{h,\widetilde{h}}) - C^{(j)}(\ub) \big|
\,O(t_{n})
= o(1)\, O(t_{n}) = o(t_{n}), 
\end{align}
where we have used that  by Lemma~\ref{lemma HD of general inverse second}(i) 
for all sufficiently large~$n$ one has 
\[
 \sup_{C_{n}^{h,\htilde} \in \mathcal{D}_{n}^{(1)}}\,  \xi_{jn}^{h_{j},\htilde_{j}}(1-\delta) \leq 1-\delta/2 
 \quad \& \quad 
  \inf_{C_{n}^{h,\htilde} \in \mathcal{D}_{n}^{(1)}}\,  \xi_{jn}^{h_{j},\htilde_{j}}(\delta) \geq \delta/2  
\]
and thus $\ub_{n}^{h,\widetilde{h}} \in I_{j}(\delta/2)$. 

Further as from the properties of the copula function we know that $C^{(j)} \in [0,1]$ one 
can with the help of Lemma~\ref{lemma HD of general inverse second}(ii) (for all sufficiently large~$n$) bound 
\begin{align}
 \notag 
 \sup_{C_{n}^{h,\htilde} \in \mathcal{D}_{n}^{(1)}}\, 
&\sup_{\ub \in I_{j}^{c}(\delta)}
\big\{\big|C^{(j)}(\ub_{n}^{h,\widetilde{h}}) - C^{(j)}(\ub) \big|\,  \big|\xi_{jn}^{h_{j},\htilde_{j}}(u_{j}) - u_{j} \big| \big\}
\\
\notag 
& 
\leq 2\,\sup_{C_{n}^{h,\htilde} \in \mathcal{D}_{n}^{(1)}}\,  
 \sup_{v \in [0,\delta] \cup [1-\delta,1]} \big|\xi_{jn}^{h_{j},\htilde_{j}}(v) - v \big| 
\\
\label{eq: ineq for Cj minus Cj for uj close to the edge}
&\leq 2\,\sup_{C_{n}^{h,\htilde} \in \mathcal{D}_{n}^{(1)}}\,
 t_{n} \Big\{\sup_{v \in [0,\delta]\cup [1-\delta,1]} \big|h_{j}(v)\big| 
+ \sup_{v \in [0,2\,\delta]\cup [1-2\delta,1]} \big|\htilde_{j}(v)\big|  \Big\} 
 + o(t_{n}), 
\end{align}
where (similarly as above) we have used that by Lemma~\ref{lemma HD of general inverse second}(i)  
for all sufficiently large~$n$ one has 
\[
 \sup_{C_{n}^{h,\htilde} \in \mathcal{D}_{n}^{(1)}} 
 \xi_{jn}^{h_{j},\htilde_{j}}(\delta) \leq 2\delta 
 \quad \& \quad 
  \inf_{C_{n}^{h,\htilde} \in \mathcal{D}_{n}^{(1)}} \xi_{jn}^{h_{j},\htilde_{j}}(1-\delta) \geq 1-2\delta.  
\]
Now by Remark~\ref{rem: asympt diminish at borders} one can make 
$\sup_{h \in \mathcal{A}_{n}}\sup_{v \in [0,\delta]\cup [1-\delta,1]} \big|h_{j}(v)\big|$ arbitrarily small 
by taking $n$ sufficiently large and $\delta$ small enough. Note that from the properties of $\mathcal{B}$ 
and~\eqref{eq: r is diminishing at borders} 
the same is true also for 
$\sup_{v \in [0,2\,\delta]\cup [1-2\delta,1]} \big|\htilde_{j}(v)\big|$ 
as one can bound
\[
 \sup_{\htilde_{j} \in \mathcal{B}_{1}} 
 \sup_{v \in [0,2\,\delta]\cup [1-2\delta,1]} \big|\htilde_{j}(v)\big| 
 \leq \sup_{v \in [0,2\,\delta]\cup [1-2\delta,1]} r(v). 
\]

Thus combining \eqref{eq: ineq for Cj minus Cj for uj close to the edge} 
with \eqref{eq: ineq for Cj minus Cj for uj far from edge} yields that 
also the right-hand side of~\eqref{eq: C at xi minus C at u} is $o(t_{n})$ \
(uniformly in $h$, $\widetilde{h}$ and $\ub$) and so we have showed that  
\begin{equation*}
 C\big(\xib_{n}^{h,\widetilde{h}}(\ub)\big)
 - C(\ub)
 = \sum_{j=1}^{d} C^{(j)}(\ub) \big[\xi_{jn}^{h_{j},\htilde_{j}}(u_{j}) - u_{j} \big]  
 + o(t_{n}).  
\end{equation*}
This combined with Lemma~\ref{lemma HD of general inverse second}(ii) implies 
\begin{equation*}
 C\big(\xib_{n}^{h,\widetilde{h}}(\ub)\big) 
 - C(\ub)
 = t_{n}\,\sum_{j=1}^{d} C^{(j)}(\ub) \big[ - h_{j}(u_j) - \htilde_{j}\big(\xi_{jn}^{h_{j},\htilde_{j}}(u_j)\big)\big] 
 + o(t_{n}), 
\end{equation*}
which together with \eqref{eq: expanding copula functional} gives the 
statement of Theorem~\ref{thm HD of copula fctional}. 

\subsection{Proof of Theorem~\ref{thm HD of copula fctional second}}

Similarly as in the proof of Theorem~\ref{thm HD of copula fctional} 
denote
\begin{equation*} %
 \xi_{jn}^{h_{j},\htilde_{j}}(u) = \big(U + t_{n}\,h_{j} + \ttilde_{n}\,\htilde_{j}\big)^{-1}(u).  
\end{equation*}
and 
\begin{equation*} %
 \xib_{n}^{h,\widetilde{h}}(\ub) = \big(\xi_{1n}^{h_{1},\widetilde{h}_{1}}(u_{1}), \dotsc,\xi_{dn}^{h_{d},\widetilde{h}_{d}}(u_{d})\big)\tr. 
\end{equation*}

Note that Lemma~\ref{lemma HD of general inverse second}(i) implies that 
$\xib_{n}^{h,\widetilde{h}}(\ub) \ntoinfty \ub$ (uniformly in 
$C_{n}^{h,\htilde} \in \mathcal{D}_{n}^{(2)}$ 
and $\ub \in [0,1]^{d}$, $j \in \{1,\dotsc,d\}$). 
Now in the same way as in the proof of Theorem~\ref{thm HD of copula fctional}  
one can make use of the asymptotic uniform equicontinuity of the functions in $\mathcal{A}_{n}$
and the form of~$\widetilde{h}$ given in~\eqref{eq: tilde hn for copula2} to derive 
that %
 \begin{align}
\notag
  \Phi(C + t_{n}\,h + \ttilde_{n}\,\widetilde{h})(\ub) 
 &= \big(C + t_{n}\,h + \ttilde_{n}\,\widetilde{h}\big)
\big(\xib_{n}^{h,\widetilde{h}}(\ub)\big)
\\
\label{eq: expanding copula functional second}
 &= C\big(\xib_{n}^{h,\widetilde{h}}(\ub)\big) 
  + t_{n}\,h(\ub)  
  + \ttilde_{n}\, \sum_{j=1}^{d} C^{(j)}(\ub)\,\htilde_{j}\big(\xi_{jn}^{h_{j},\htilde_{j}}(u_j)\big)
   + o(t_{n}),   
 \end{align}
where here (as well as in the sequel) by $o(t_{n})$ we understand a remainder term that 
may depend on~$\ub$, $h$ 
and $\widetilde{h}$ but it is uniformly small, i.e.  
\[
\sup_{C_{n}^{h,\htilde} \in \mathcal{D}_{n}^{(2)}}
 \sup_{\ub \in [0,1]^{d}} \frac{|o(t_{n})|}{t_{n}} \ntoinfty 0. 
\]

Using the second order Taylor expansion we obtain
\begin{align}
 \notag 
 \Big|C&\big(\xib_{n}^{h,\widetilde{h}}(\ub)\big)
- C(\ub) 
 - \sum_{j=1}^{d} C^{(j)}(\ub) \big[\xi_{jn}^{h_{j},\htilde_{j}}(u_{j}) - u_{j} \big]  
 \Big|  
\\
\label{eq: C at xi minus C at u second}
 & \leq  
\sum_{j=1}^{d} \sum_{k=1}^{d} \big|C^{(j,k)}(\ub_{n}^{h,\widetilde{h}}) \big|\,  \big|\xi_{jn}^{h_{j},\htilde_{j}}(u_{j}) - u_{j} \big|\, 
\big|\xi_{kn}^{h_{k},\htilde_{k}}(u_{k}) - u_{k} \big|,      
\end{align}
where $\ub_{n}^{h,\widetilde{h}}$ lies between the points 
$\xib_{n}^{h,\htilde}(\ub)$ and $\ub$. 

Now note that if $\beta > 0$ then 
one can use 
Lemma~\ref{lemma HD of general inverse second}(iii) to deduce that 
for all sufficiently large~$n$ the $j$-th component of $\xib_{n}^{h,\htilde}(\ub)$ 
satisfies 
\begin{equation} \label{eq: ineq for xi at u}
\xi_{jn}^{h_j,\htilde_j}(u_{j}) \geq \tfrac{u_{j}}{2}, 
 \quad \& \quad 
 \xi_{jn}^{h_j,\htilde_j}(1-u_{j}) \leq 1- \tfrac{u_{j}}{2}
\end{equation}
for each $u_{j} \in [\epsilon\,t_{n}^{\vartheta},1-\epsilon\,t_{n}^{\vartheta}]$, $j \in \{1,\dotsc,d\}$, $C_{n}^{h,\htilde} \in \mathcal{D}_{n}^{(2)}$. 
And thus the same is also true for the components of~$\ub_{n}^{h,\widetilde{h}}$. 

Now with the help of Lemma~\ref{lemma HD of general inverse second}(ii), assumption~\ref{assump: copula2}, 
\eqref{eq: C at xi minus C at u second} and \eqref{eq: ineq for xi at u}
 we have 
for $\ub\in \widetilde{I}_n(\epsilon)$ that
\begin{align}
\notag 
 \Big| C\big(\xib_{n}^{h,\widetilde{h}}&(\ub)\big)
- C(\ub) 
 - \sum_{j=1}^{d} C^{(j)}(\ub) \big[\xi_{jn}^{h_{j},\htilde_{j}}(u_{j}) - u_{j} \big]  
 \Big|  
\\
\label{eq: C at xi minus C at u expansion}
& \leq \sum_{j=1}^{d} \sum_{k=1}^{d} O\big(\tfrac{1}{u_{j}^{\beta}(1-u_{j})^{\beta}
 \,u_{k}^{\beta}(1-u_{k})^{\beta}}\big)\,  
 \Big[t_{n}\,|h_{j}(u_{j})|  + 
 \ttilde_{n}\big|\htilde_{j}\big(\xi_{jn}^{h_{j},\widetilde{h}_{j}}(u_{j})\big)\big|+o(t_{n})\Big]
\\
\notag 
& \qquad  \qquad \qquad \qquad \qquad \qquad \qquad \qquad \times \Big[t_{n}\,|h_{k}(u_{k})|  + 
 \ttilde_{n}\big|\htilde_{k}\big(\xi_{kn}^{h_{k},\widetilde{h}_{k}}(u_{k})\big)\big|+o(t_{n})\Big] 
\end{align}

Now let $u_{0} \in [\epsilon\,t_{n}^{\vartheta}, 1/2]$ be fixed. Note that for each 
$j \in \{1,\dotsc,d\}$ 
\begin{align*}
 \sup_{u \in [\epsilon\,t_{n}^{\vartheta}, 1-\epsilon\,t_{n}^{\vartheta}]} 
 \frac{t_{n}\,|h_{j}(u)|}{u^{\beta}(1-u)^{\beta}} 
 & \leq O(t_{n}) + \sup_{u \in [\epsilon\,t_{n}^{\vartheta}, u_{0}] \cup [1-u_{0}, 1-\epsilon\,t_{n}^{\vartheta}]} \frac{t_{n}\,|h_{j}(u)|}{u^{\beta}(1-u)^{\beta}} 
 \\
 & \leq O(t_{n}) + O(t_{n}^{1-\vartheta \beta})\sup_{u \in [\epsilon\,t_{n}^{\vartheta}, u_{0}] \cup [1-u_{0}, 1-\epsilon\,t_{n}^{\vartheta}]} |h_{j}(u)|. 
\end{align*}
As $u_{0}$ can be taken arbitrarily small (for $n$ sufficiently large enough) one can 
conclude that 
\begin{equation*} %
 \max_{j\in\{1,\dotsc,d\}} \sup_{u \in [\epsilon\,t_{n}^{\vartheta}, 1-\epsilon\,t_{n}^{\vartheta}]} 
 \frac{t_{n}\,|h_{j}(u)|}{u^{\beta}(1-u)^{\beta}} = O(t_{n}) + o(t_{n}^{1-\vartheta \beta}).  
\end{equation*}
This together with \eqref{eq: C at xi minus C at u expansion}, \eqref{eq: set B1alpha} and  $\widetilde t_n=o(t_n^{1/2+2\gamma})$ implies 
\begin{multline} 
\label{eq: C at xi minus C at u final} 
  \sup_{\ub\in \widetilde{I}_n(\epsilon)} \Big| C\big(\xib_{n}^{h,\widetilde{h}}(\ub)\big)
- C(\ub) 
 - \sum_{j=1}^{d} C^{(j)}(\ub) \big[\xi_{jn}^{h_{j},\htilde_{j}}(u_{j}) - u_{j} \big]  
 \Big| 
 \\ 
 = O(t_{n}^{2})  + o(t_{n}^{2-2\vartheta \beta}) + o(t_{n}^{3/2-\beta\vartheta + 2\gamma - \vartheta(\beta-\alpha)_{+}})
  + o(t_{n}^{1+4\gamma - 2\vartheta (\beta-\alpha)_{+}})
  = o(t_{n}),
\end{multline}
where we use that $\beta \leq \frac{1}{2}$, $\vartheta \leq 1$ 
and $2\gamma - \vartheta (\beta - \alpha)_{+} \geq 0 $ which 
can be  deduced in the same way as in the reasoning following~\eqref{eq: B2n neglig} below by taking $s=\infty$. 

Now \eqref{eq: C at xi minus C at u final} combined 
with Lemma~\ref{lemma HD of general inverse second}(ii) implies 
\begin{equation*}
 C\big(\xib_{n}^{h,\widetilde{h}}(\ub)\big) 
 - C(\ub)
 = - t_{n}\,\sum_{j=1}^{d} C^{(j)}(\ub) \, h_{j}(u_j)
  - \ttilde_{n}\,\sum_{j=1}^{d} C^{(j)}(\ub) \, \htilde_{j}\big(\xi_{jn}^{h_{j},\htilde_{j}}(u_j)\big)
 + o(t_{n}), 
\end{equation*}
which together with \eqref{eq: expanding copula functional second} gives the 
statement of Theorem~\ref{thm HD of copula fctional second}. 

\subsection{Hadamard differentiability of a quantile function} 
\label{subsec HD of quantile}

Now it remains to generalize the result on the inverse of the cdf on~$[0,1]$. 
More precisely we are interested in the differentiable properties 
of the mapping 
\[
 \Lambda: F \to F^{-1}, \quad \text{at} \quad  F=U, 
\]
for  cumulative distribution functions 
defined on $[0,1]$ that satisfy $F(0)=0$ and $F(1)=1$. 

For the special case $d=1$ denote $\mathcal{D}_{1n}$ and  
$\mathcal{A}_{1n}$ the sets $\mathcal{D}_{n}$ and $\mathcal{A}_{n}$ 
introduced in Section~\ref{sec: HD}. Note that the elements of the set 
$\mathcal{A}_{1n}$ are uniformly bounded, asymptotically uniformly equi-continuous and satisfy the property described in Remark~\ref{rem: asympt diminish at borders}.

Further given sequence $\{t_{n}\}$ and $\{\ttilde_{n}\}$ of positive constants going to zero denote 
\begin{align*}
 \mathcal{F}_{1n} &= \{ F \in \mathcal{D}_{1n}: F = U+t_{n}\,h + \ttilde_{n}\,\widetilde{h}, 
 \ h \in \mathcal{A}_{1n}, \widetilde{h} \in \widetilde{\mathcal{H}}_{M} \},
 \\
\mathcal{F}_{1n}^{\alpha} &= \{ F \in \mathcal{D}_{1n}: F = U+t_{n}\,h + \ttilde_{n}\,\widetilde{h}, 
 \ h \in \mathcal{A}_{1n}, \widetilde{h} \in \mathcal{B}_{1n}^{\alpha} \}, 
\end{align*}
where $\widetilde{\mathcal{H}}_{M}$ and $\mathcal{B}_{1n}^{\alpha}$ 
were introduced in \eqref{eq: set HM} and \eqref{eq: set B1alpha}. Note that 
$\mathcal{F}_{1n}^{\alpha} \subset \mathcal{F}_{1n}$. Thus the statements of the following 
lemma that holds for $\mathcal{F}_{1n}$ are automatically also true for $\mathcal{F}_{1n}^{\alpha}$.

Finally denote 
\[
 \xi_{n}^{h,\widetilde{h}}(u) 
 = (U_{n}^{h,\htilde})^{-1}(u), 
 \quad u \in [0,1],    
\]
with $U_{n}^{h,\htilde} = U+t_{n}\,h + \ttilde_{n}\,\widetilde{h}$  and the inverse should be understood as in~\eqref{eq: generalised inverse}. 
\begin{lemma} \label{lemma HD of general inverse second}
Let $\mathcal{F}_{1n}$ and  $\mathcal{F}_{1n}^{\alpha}$ be as explained above. Then the following statements hold. 
\begin{enumerate}
 \item   $
 \sup_{U_{n}^{h,\htilde}\in \mathcal{F}_{1n}}\, 
 \sup_{u \in [0,1]} \big|\xi_{n}^{h,\widetilde{h}}(u) - u\big| \ntoinfty 0$. 
\item 
\begin{equation*} %
\sup_{U_{n}^{h,\htilde}\in \mathcal{F}_{1n}}\, 
\sup_{u \in [0,1]}
\bigg|\frac{\xi_{n}^{h,\widetilde{h}}(u) - u}{t_{n}} + h(u) 
+ \frac{\ttilde_{n}}{t_{n}}\,\widetilde{h}\big(\xi_{n}^{h,\widetilde{h}}(u)\big)  \bigg| 
 \ntoinfty 0.  
\end{equation*}
\item Suppose that $\ttilde_{n} = o(t_{n}^{1/2 + 2\gamma})$.  
Then for each $\epsilon > 0$ for all sufficiently large $n$  
for all $u \in [\epsilon\,t_{n}^{\vartheta},\frac{1}{2}]$: 
\[
 \sup_{U_{n}^{h,\htilde}\in \mathcal{F}_{1n}^{\alpha}}\,  \xi_{n}^{h,\htilde}\big(u\big) \leq 2u
 \quad \& \quad 
  \inf_{U_{n}^{h,\htilde}\in \mathcal{F}_{1n}^{\alpha}}\,  \xi_{n}^{h,\htilde}\big(1-u\big) \geq 1-2u
\]
and also 
\[
 \sup_{U_{n}^{h,\htilde}\in \mathcal{F}_{1n}^{\alpha}}\, 
 \xi_{n}^{h,\htilde}\big(1-u\big) \leq 1-\tfrac{u}{2} 
 \quad \& \quad 
  \inf_{U_{n}^{h,\htilde}\in \mathcal{F}_{1n}^{\alpha}}\, 
\xi_{n}^{h,\htilde}\big(u\big) \geq \tfrac{u}{2}.   
\]
\end{enumerate}
\end{lemma}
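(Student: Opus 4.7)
The plan is to derive all three statements from a single master identity that captures how close $\xi_n^{h,\widetilde h}(u)$ is to solving the equation $U_n^{h,\widetilde h}(v) = u$. Since we are in the one-dimensional case, the jump-height condition \eqref{eq: Dn} on $\mathcal{D}_{1n}$ just says $\sup_{v}|U_n^{h,\widetilde h}(v)-U_n^{h,\widetilde h}(v_{-})|=o(t_n)$ uniformly over $U_n^{h,\widetilde h}\in\mathcal{F}_{1n}$. Combining this with the defining inequality $U_n^{h,\widetilde h}(\xi_n^{h,\widetilde h}(u)_-)\leq u \leq U_n^{h,\widetilde h}(\xi_n^{h,\widetilde h}(u))$ of the generalized inverse, I would first establish
\begin{equation*}
\xi_n^{h,\widetilde h}(u) + t_n\,h\bigl(\xi_n^{h,\widetilde h}(u)\bigr)+\widetilde t_n\,\widetilde h\bigl(\xi_n^{h,\widetilde h}(u)\bigr) = u + o(t_n),
\end{equation*}
uniformly in $u\in[0,1]$ and $U_n^{h,\widetilde h}\in\mathcal{F}_{1n}$.

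Statement (i) is then immediate: since $h\in\mathcal{A}_{1n}$ is uniformly bounded by \eqref{eq: as bound and close to L} and $\widetilde h\in\widetilde{\mathcal{H}}_{M}$ is bounded by $M$, rearranging gives $\xi_n^{h,\widetilde h}(u)-u = -t_n h - \widetilde t_n\widetilde h + o(t_n) = o(1)$ uniformly. For statement (ii), I would divide the identity by $t_n$ and then use the asymptotic uniform equi-continuity \eqref{eq: as unif equi cont} of $\mathcal{A}_{1n}$ together with (i) to replace $h(\xi_n^{h,\widetilde h}(u))$ by $h(u)$ with error $o(1)$ uniformly; the $\widetilde h$-term does not need to be moved.

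Statement (iii) is the main obstacle. The key reduction, by monotonicity of the generalized inverse and the $o(t_n)$-jump condition, is that $\xi_n^{h,\widetilde h}(u)\leq 2u$ is equivalent to $U_n^{h,\widetilde h}(2u)\geq u-o(t_n)$, i.e. to $u + t_n h(2u)+\widetilde t_n \widetilde h(2u)\geq o(t_n)$; and $\xi_n^{h,\widetilde h}(u)\geq u/2$ is equivalent to $t_n h(u/2)+\widetilde t_n \widetilde h(u/2)\leq u/2 + o(t_n)$. The symmetric inequalities at $1-u$ follow by the same argument applied to $1-U_n^{h,\widetilde h}(1-\cdot)$. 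To control the $h$-term for $u$ near zero, I would invoke Remark~\ref{rem: asympt diminish at borders}: for any $\varrho>0$ there is $\delta>0$ with $\sup_{\mathcal{A}_{1n}}\sup_{w\in[0,2\delta]}|h(w)|<\varrho$ for $n$ large. Hence $t_n|h(2u)|/u$ and $t_n|h(u/2)|/u$ are bounded by $\varrho\,t_n^{1-\vartheta}/\epsilon$, which can be made arbitrarily small by choosing $\varrho$ tiny (needed when $\vartheta=1$) or automatically as $n\to\infty$ (when $\vartheta<1$); on $[\delta,\tfrac12]$ the $t_n$-contribution is trivially $O(t_n)=o(u)$. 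For the $\widetilde h$-term, after replacing $\epsilon$ by $\epsilon/2$ so that $2u$ and $u/2$ lie in $\widetilde I_{1n}(\epsilon/2)$, the defining bound in \eqref{eq: set B1alpha} gives $\widetilde t_n|\widetilde h(2u)|\lesssim \widetilde t_n u^\alpha(1-u)^\alpha$, whence
\begin{equation*}
\frac{\widetilde t_n|\widetilde h(2u)|}{u}\lesssim \widetilde t_n\,u^{\alpha-1}\lesssim \widetilde t_n\,t_n^{-\vartheta(1-\alpha)}.
\end{equation*}
Under $\widetilde t_n=o(t_n^{1/2+2\gamma})$ and $\vartheta\leq (1+4\gamma)/(2(1-\alpha))$, we have $\vartheta(1-\alpha)\leq 1/2+2\gamma$, so the ratio tends to zero; the $u/2$ case is analogous.

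The delicate bookkeeping, which I expect to be the hardest part, is to choose the threshold $\delta$ (hence $\varrho$) so that the boundary smallness of $h$ beats the worst case $\vartheta=1$, $u=\epsilon t_n$, where $t_n/u$ is merely of order $1/\epsilon$ rather than $o(1)$. Once $\varrho<\epsilon/4$ (say) is fixed, the identity from step one plus the bounds on $\mathcal{B}_{1n}^{\alpha}$ deliver all four inequalities uniformly over $\mathcal{F}_{1n}^{\alpha}$ for $n$ sufficiently large.
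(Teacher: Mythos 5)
Your proposal is correct and follows essentially the same route as the paper: the same two-sided inequality from the definition of the generalized inverse combined with the $o(t_n)$ jump condition yields (i) and (ii), and for (iii) your direct verification that $U_n^{h,\htilde}(2u)\geq u$ and $U_n^{h,\htilde}(u/2)<u$ is just the contrapositive form of the paper's argument by contradiction, using the identical split into a boundary region (handled via Remark~\ref{rem: asympt diminish at borders}) and an interior region, and the identical exponent condition $\vartheta(1-\alpha)\leq 1/2+2\gamma$ for the $\htilde$-term.
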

\begin{proof} 

For simplicity for an arbitrary non-decreasing function~$g$ defined on~$[0,1]$ introduce the notation 
\[
 g(v_{-}) = 
 \begin{cases} 
  \lim_{\epsilon \to 0_+} g(v - \epsilon), & \text{if } v \in (0,1], \\
   g(0), & \text{if } v = 0.                    
 \end{cases}  
\]
Using this notation by the definition of the generalized inverse function 
\begin{equation*} %
 \big(U+t_{n}\,h+\ttilde_{n}\,\widetilde{h}\big)\big(\xi_{n}^{h,\widetilde{h}}(u)_{-}\big) 
 \leq u 
  \leq \big(U+t_{n}\,h+\ttilde_{n}\,\widetilde{h}\big)\big(\xi_{n}^{h,\widetilde{h}}(u)\big),    
\end{equation*}
which with the help of \eqref{eq: Dn} yields that 
\begin{equation} \label{eq: ineq for quantiles second}
  \xi_{n}^{h,\widetilde{h}}(u) + t_{n}\,h\big(\xi_{n}^{h,\widetilde{h}}(u)\big) +\ttilde_{n}\,\widetilde{h}\big(\xi_{n}^{h,\widetilde{h}}(u)\big) 
  + o(t_{n})
  \leq u \leq \xi_{n}^{h,\widetilde{h}}(u) + t_{n}\,h\big(\xi_{n}^{h,\widetilde{h}}(u)\big) + \ttilde_{n}\,\widetilde{h}\big(\xi_{n}^{h,\widetilde{h}}(u)\big). 
\end{equation}
From this one can conclude that
\begin{equation*}
  \sup_{U_{n}^{h,\htilde}\in \mathcal{F}_{1n}}\, 
\sup_{u \in [0,1]} \big|\xi_{n}^{h,\widetilde{h}}(u) - u\big| 
 \leq t_{n} \sup_{h \in \mathcal{A}_{1n}}\, 
\sup_{u \in [0,1]} \big|h(u) \big| +
\ttilde_{n}
\sup_{\widetilde{h} \in \widetilde{\mathcal{H}}_{M}}\, 
\sup_{u \in [0,1]} \big|\widetilde{h}(u) \big| 
+ o(t_{n})
 \ntoinfty 0,
\end{equation*}
which yields \textit{the first statement} of the lemma. 

\medskip 

To prove \textit{the second statement} note that the inequalities 
in~\eqref{eq: ineq for quantiles second} can be rewritten as 
\begin{align*} %
 - h\big(\xi_{n}^{h,\widetilde{h}}(u)\big) + h(u) + o(1) 
&  \leq \frac{\xi_{n}^{h,\widetilde{h}}(u) - u}{t_{n}} + h(u) + \frac{\ttilde_{n}}{t_{n}}\,\widetilde{h}\big(\xi_{n}^{h,\widetilde{h}}(u)\big) 
\\
& \leq  - h\big(\xi_{n}^{h,\widetilde{h}}(u)_{-}\big) + h(u) 
+ o(1).  
\end{align*}

Now using the first statement of the lemma 
and that the functions in $\mathcal{A}_{1n}$ are 
asymptotically equicontinuous (see Section \ref{sec: HD}) one can conclude 
that both the left-hand side and the right-hand side of the above inequalities  
converge to zero (uniformly in $u$, $h$ and $\widetilde{h}$)  
which was to be proved. 

\medskip 

Regarding \textit{the third statement} note that 
for each fixed positive $u_{0}$ the proof for 
$u \in [u_{0}, \tfrac{1}{2}]$ follows from the first statement 
of the lemma. 

Thus in what follows we prove that 
for all $u \in [\epsilon\,t_{n}^{\vartheta},u_{0}]$
\begin{equation} \label{eq: ineq for quantiles to be proved}
 \sup_{U_{n}^{h,\htilde}\in \mathcal{F}_{1n}^{\alpha}}\,  \xi_{n}^{h,\htilde}(u) \leq 2 u
\quad \text{ and } \quad 
  \inf_{U_{n}^{h,\htilde}\in \mathcal{F}_{1n}^{\alpha}}\, 
\xi_{n}^{h,\htilde}(u) \geq \tfrac{u}{2}.   
\end{equation}
The remaining statements can be proved analogously. 

To prove the first inequality in \eqref{eq: ineq for quantiles to be proved} 
suppose that for some $n\in \mathbb{N}$ there exists 
$u \in [\epsilon\,t_{n}^{\vartheta},u_{0}]$ and $U_{n}^{h,\htilde}\in \mathcal{F}_{1n}^{\alpha}$ 
such that 
\[
 \xi_{n}^{h,\htilde}(u) > 2u. 
\]
This implies that 
\begin{align*}
 u & >  \big(U + t_{n}\,h + \ttilde_{n}\,\htilde \big)(2u)  
 = 2u + t_{n}\,h(2u) + 
 \ttilde_{n}\,\htilde(2u)
\\
&\geq 2 u + t_{n}\,h(2u) 
 + o(t_{n}^{1/2 + 2 \gamma})\,u^{\alpha}(1-u)^{\alpha}
\end{align*}
which further gives 
\begin{equation} \label{eq: u less than contradiction}
  u  \leq  - t_{n}\,h(2u) 
  - o(t_{n}^{1/2+2\gamma})\,u^{\alpha}.
\end{equation}
Note that~\eqref{eq: assump on gamma and vartheta} 
implies that $\vartheta \leq \frac{1/2+2\gamma}{1-\alpha}$ and thus 
\begin{align*}
1 &\leq  \big(\tfrac{t_{n}}{u}\big)\, |h(2u)| 
  + o\Big(\tfrac{t_{n}^{1/2+2\gamma}}{u^{1-\alpha}}\Big) 
\\
&\leq O(t_{n}^{1-\vartheta})\,\sup_{u \in [0,2u_{0}]} |h(u)| 
 + o\big(t_{n}^{1/2+2\gamma - \vartheta (1-\alpha)}\big)  
= O(1)\,\sup_{u \in [0,2u_{0}]} |h(u)| 
 + o(1).   
\end{align*}
But this is a contradiction as by Remark~\ref{rem: asympt diminish at borders} 
the right hand side of the last inequality  
can be made arbitrarily small uniformly in~$h$ by taking $u_{0}$ small enough.

Analogously to prove the second inequality in \eqref{eq: ineq for quantiles to be proved} 
suppose that for some $u \in [\epsilon\,t_{n}^{\vartheta},\delta]$ 
and $U_{n}^{h,\htilde}\in \mathcal{F}_{1n}^{\alpha}$ 
\[
 \xi_{n}^{h,\htilde}\big(u\big) < \tfrac{u}{2}.   
\]
This implies that 
\begin{equation} \label{eq: u less than}
 u \leq \big(U + t_{n}\,h + \ttilde_{n}\,\htilde \big)(\tfrac{u}{2})  
 = \tfrac{u}{2} + t_{n}\,h(\tfrac{u}{2}) + o(t_{n}^{1/2+2\gamma})\,\htilde(\tfrac{u}{2}). 
\end{equation}
Note that by the properties of $\mathcal{B}_{1n}^{\alpha}$ 
one has $\htilde(\frac{u}{2})  = O(u^{\alpha})$.  
Thus with the help of~\eqref{eq: u less than} 
\[
 \tfrac{u}{2} \leq   t_{n} 
\sup_{v \in [0,\frac{\delta}{2}]}
 |h(v)| 
 + o(t_{n}^{1/2+2\gamma})\,O(u^{\alpha}).
\]
Now one can arrive at a contradiction similarly as from 
the inequality~\eqref{eq: u less than contradiction}.
\end{proof}

\section{Proofs of results in Section~\ref{sec: applic to empir resid copula}} \label{appendix B}

\subsection{Restricting to a subset of \texorpdfstring{$[0,1]^d$}{[0,1]d}}
 
Let $\epsilon > 0$ be fixed. First of all we show that it is sufficient to consider 
$\ub \in J_{n}(\epsilon)$, where 
\begin{equation} \label{eq: Jn epsilon}
J_{n}(\epsilon) = \big[\tfrac{\epsilon}{\sqrt{n}},1-\tfrac{\epsilon}{\sqrt{n}}\big]^{d}.  
\end{equation}
Note that 
\[
 [0,1]^{d} \setminus J_{n}(\epsilon) = \bigcup_{j=1}^{d} \big(J_{jn}^{(L)}(\epsilon) \cup  J_{jn}^{(U)}(\epsilon) \big),  
\]
where 
\[
 J_{jn}^{(L)}(\epsilon) = \big\{ \ub \in [0,1]^{d} : u_{j} < \tfrac{\epsilon}{\sqrt{n}}\,\big\}, 
 \qquad 
 J_{jn}^{(U)}(\epsilon) = \big\{ \ub \in [0,1]^{d} : u_{j} > 1 - \tfrac{\epsilon}{\sqrt{n}}\,\big\}.  
\]
Suppose for a moment that \underline{$\ub \in J_{jn}^{(L)}(\epsilon)$}, then 
\[
 \big|\sqrt{n}\,\big[\widehat{C}_{n}(\ub) - C(\ub)\big]\big| \leq 
 \sqrt{n}\,\big[|\widehat{C}_{n}(\ub)| + |C(\ub)|\big] 
 \leq \sqrt{n}\, \big[\widehat{F}_{j\epshat}\big(\widehat{F}_{j\epshat}^{-1}(u_j)\big) + u_{j}\big]
 \leq 2\,\epsilon +\tfrac{1}{\sqrt{n}}. 
\]

Now consider that \underline{$\ub \in J_{jn}^{(U)}(\epsilon)$}. 
Denote $\ub^{(-j)}$ the vector~$\ub$ whose $j$-th component is  
replaced with~$1$, i.e.
\[
 \ub^{(-j)} = (u_1,\dotsc,u_{j-1},1,u_{j+1},\dotsc,u_{d})\tr.  
\]
Then from the basic properties of the copula function \citep[see e.g.\ Theorem~2.2.4][]{nelsen_2006}
\begin{equation} \label{eq: comparing C at two poins with large uk}
 \sqrt{n}\,\big|C(\ub^{(-j)}) -  C(\ub)\big| \leq \epsilon. 
\end{equation}
Similarly also 
\begin{align} 
 \sqrt{n}\,\big|\widehat{C}_{n}(\ub^{(-j)}) - \widehat{C}_{n}(\ub)\big| 
 & = 
 \frac{1}{\sqrt{n}} \suman 
 \Big[\ind\big\{ \epshat_{1i} \leq \widehat{F}_{1\epshat}^{-1}(u_1), 
 \dotsc,  \epshat_{ji} \leq \widehat{F}_{j\epshat}^{-1}(1) , \dotsc,  \epshat_{di} \leq \widehat{F}_{1\epshat}^{-1}(u_d) 
 \big\} 
\notag \\ 
 & \qquad \qquad \  - \ind\big\{ \epshat_{1i} \leq F_{1\epshat}^{-1}(u_1), 
 \dotsc, \epshat_{ji} \leq F_{j\epshat}^{-1}(u_j) , \dotsc, \epshat_{di} \leq F_{d\epshat}^{-1}(u_d) 
 \big\} \Big]
\notag  \\
& \leq
 \frac{1}{\sqrt{n}} \suman 
 \Big[\ind\big\{ \epshat_{ji} \leq \widehat{F}_{j\epshat}^{-1}(1) \big\} 
   - \ind\big\{ \epshat_{ji} \leq \widehat{F}_{j\epshat}^{-1}(u_{j}) \big\} \Big]
\notag \\
\label{eq: comparing Cnhat at two poins with large uk}
 &= \sqrt{n}\,\big[1 - \Fhat_{j\epshat}\big(\Fhat_{j\epshat}^{-1}(u_j)\big)\big] 
  \leq \epsilon. 
\end{align}
Now combining \eqref{eq: comparing C at two poins with large uk} and 
\eqref{eq: comparing Cnhat at two poins with large uk} yields that 
uniformly in $\ub \in J_{jn}^{(U)}$
\[
 \sqrt{n}\,\big[\widehat{C}_{n}(\ub) - C(\ub)\big] 
  = \sqrt{n}\,\big[\widehat{C}_{n}(\ub^{(-j)}) - C(\ub^{(-j)})\big] + O(\epsilon).    
\]
Repeatedly using this argument for other components of $\ub$ bigger than 
$1 - \tfrac{\epsilon}{\sqrt{n}}$ one can conclude that without loss 
of generality one can consider only 
$\ub \in J_{n}(\epsilon)$. 
 
\bigskip

Note that to prove Theorem~\ref{thm for nonparam adjustment} 
it is sufficient to consider $\ub$ from the set 
\begin{equation*} %
\Jtilde_{n}(\epsilon) = \big[\tfrac{\epsilon}{n^{\vartheta/2}},1-\tfrac{\epsilon}{n^{\vartheta/2}}\big]^{d}.  
\end{equation*}
which is for $\vartheta < 1$ a (strict) subset of $J_{n}(\epsilon)$, see \eqref{eq: Jn epsilon}. Thus denote 
\begin{equation*} %
  \Jring_{n}(\epsilon) = 
 \begin{cases}
   J_{n}(\epsilon), \text{when proving Theorem~\ref{thm for param adjustment}}, \\
   \Jtilde_{n}(\epsilon), \text{when proving Theorem~\ref{thm for nonparam adjustment}}.  \\
 \end{cases}
\end{equation*}

\subsection{Using Theorems~\ref{thm HD of copula fctional} and~\ref{thm HD of copula fctional second}}

Let $\widehat{G}_{n\epshat}$ be as in~\eqref{eq: Ghat epshat} 
where the residuals are of the general form~\eqref{eq: resid general}. 
We will show in Section \ref{subsec: dealing with An} that for each $\epsilon > 0$ 
the representation~\eqref{eq: reprentation of Geps} 
holds uniformly in $\ub \in \Jring_{n}(\epsilon)$ 
with $\AAA_{n}$, $\mathbb{B}_{n}$ given by~\eqref{eq: An typical}, 
\eqref{eq: Bn typical} and $\ZZZ_{jn}$ introduced in~\eqref{eq: Zjn}. 

\smallskip 

First note that the marginals of $\widehat{G}_{n\epshat}$ 
are discontinuous with jumps of the height~$\frac{1}{n}$. Thus 
for $t_{n}=\frac{1}{\sqrt{n}}$ there exists a sequence of 
functions $\mathcal{D}_{n}$ such that even 
$\PP\big(\widehat{G}_{n\epshat} \in \mathcal{D}_{n}\big)=1$. 

Further, as explained in Section~\ref{sec: HD} for each $\eta > 0$ there exists   a sequence of sets $\{\mathcal{A}_{n}\}$ of functions on $[0,1]^{d}$ such that \eqref{eq: An in Kn} holds and 
Theorem~\ref{thm HD of copula fctional} is satisfied.

Finally if \ref{assump: process Z strict} (or \ref{assump: process Z less strict}) 
hold then  
as explained in Remark~\ref{rem: Zn in view of assumptions}  
there exists a set $\mathcal{B}$ as in (\ref{eq: tilde hn})
(or a sequence of sets $\{\mathcal{B}_{n}^{\alpha}\}$ as in (\ref{eq: tilde hn for copula2})) of functions on $[0,1]^{d}$
such that assumptions of Theorem~\ref{thm HD of copula fctional} 
(or Theorem~\ref{thm HD of copula fctional second}) 
with $t_{n} = \frac{1}{\sqrt{n}}$ 
(and $\widetilde{t}_{n} = n^{-(1/4+\gamma)}$ ) 
are met 
and at the same time 
\[
 \liminf_{n \to \infty} \PP\big(\mathbb{B}_{n} \in \mathcal{B}\big) 
 \geq 1- \eta 
\quad \Big(\text{or} \quad 
  \liminf_{n \to \infty} \PP\Big(\tfrac{t_n}{\widetilde t_n}\mathbb{B}_{n} \in \mathcal{B}_{n}^{\alpha}\Big) 
 \geq 1- \eta \Big).  
\]
Now note that one can rewrite the empirical copula process as 
\[
 \sqrt{n}\,\big(\widehat{C}_{n} - C\big) = \frac{\Phi(\widehat{G}_{n\epshat}) - \Phi(C)}{\tfrac{1}{\sqrt{n}}}, 
\]
where $\Phi$ stands for the copula mapping formally introduced in Section~\ref{sec: HD}. 
Thus with the help of %
\eqref{eq: reprentation of Geps}  
one can use Theorem~\ref{thm HD of copula fctional} 
(or Theorem~\ref{thm HD of copula fctional second}) 
with $h=\AAA_{n}$ and $\widetilde{h} = n^{-1/4+\gamma}\BBB_{n}=\frac{t_n}{\widetilde t_n}\BBB_{n}$ 
to deduce that 
(uniformly in $\ub \in \Jring_{n}(\epsilon)$)
\begin{equation}
\sqrt{n}\,\big[\widehat{C}_{n}(\ub) - C(\ub)\big] 
\\
\label{eq: as repr for Cnhat proved}
=  \AAA_{n}(\ub) - \sum_{j=1}^{d} C^{(j)}(\ub)\AAA_{n}(\ub^{(j)})
  + o_{P}(1).   
\end{equation}
Now 
the standard result for the empirical process copula 
\citep[see e.g.\ Proposition~3.1][]{segers_empirical_2012} together with 
\eqref{eq: as repr for Cnhat proved} implies that 
asymptotic equivalence \eqref{eq: equiv of Cn and oracle Cn}
holds for each $\epsilon > 0$ 
uniformly in $\ub \in \Jring_{n}(\epsilon)$ 
which yields the statement of the theorem.  

\subsection{Proof of~\texorpdfstring{\eqref{eq: reprentation of Geps}}{the asymptotic representation Gnhat}} 
\label{subsec: dealing with An}
Introduce 
\begin{equation} \label{eq: process Ghnhat}
\widehat{\mathbb{G}}_{n}(\ub) = 
  \sqrt{n}\,\big[\hatG_{n\hat\varepsilon}(\ub) - C(\ub)\big]  
\end{equation}
with $\hatG_{n\hat\varepsilon}$ from (\ref{eq: Ghat eps})
and note that \eqref{eq: reprentation of Geps} is equivalent 
to showing that
\begin{equation*} %
 \widehat{\mathbb{G}}_{n}(\ub) 
= \AAA_{n}(\ub)   + \mathbb{B}_{n}(\ub) .
\end{equation*}
Now recall the set of functions $\mathcal{F}$ introduced in 
\eqref{eq: set of functions F}. 
Note that each function in $\mathcal{F}$ can be identified 
with `parameter' $\bigtimes_{j=1}^{d} (z_{j}, \ttilde_{j})$ 
(having $2d$~components). Now 
$\widehat{\mathbb{G}}_{n}(\ub)$ given 
by~\eqref{eq: process Ghnhat} can be rewritten with the help 
of the standard empirical processes notation as 
\begin{equation} \label{eq: Gnhat as empir process}
\widehat{\mathbb{G}}_{n}(\ub) = 
  \sqrt{n}\,\big[P_{n}(f_{n}^{(\ub)} - f_{0}^{(\ub)})\big] 
 + \sqrt{n}\,\big[P_{n}(f_{0}^{(\ub)}) - P(f_{0}^{(\ub)})\big] ,
\end{equation} 
where $P$ stands for the expectation given by the distribution of $\binom{\Yb}{\Xb}$, 
$P_{n}$ for the empirical expectation given by the observed data 
$\binom{\Yb_{1}}{\Xb_{1}},\dotsc,\binom{\Yb_{n}}{\Xb_{n}}$ and 
\[
 f_{n}^{(\ub)} = \bigtimes_{j=1}^{d} \big(\widetilde{F}_{nj\eps}^{-1}(u_{j}), \that_{j}\big), 
 \qquad 
f_{0}^{(\ub)} = \bigtimes_{j=1}^{d} \big(F_{j\eps}^{-1}(u_j), t_{j}\big). 
\]
Thus thanks to assumption~\ref{assump: donsker} we know that 
the empirical process $\sqrt{n}(P_{n} - P)$ is asymptotically 
uniformly equicontinuous in probability with respect to the semimetric 
$\rho$ given by~\eqref{eq: semimetric rho}. 
Further with the help of 
assumption~\ref{assump: process Z strict} (or~\ref{assump: process Z less strict}) 
\begin{align*} %
\sup_{\ub \in \Jring_{n}(\epsilon)}
P\big|f_{n}^{(\ub)} -f_{0}^{(\ub)}\big| 
& \leq  \sum_{j=1}^{d} 
\sup_{\ub \in \Jring_{n}(\epsilon)} 
 \E \big|\,\ind\big\{\that_{j}(Y_{j}; \Xb)  \leq \widetilde{F}_{nj\eps}^{-1}(u_{j})\big\} - \ind\big\{t_{j}(Y_{j}; \Xb)  \leq F_{j\eps}^{-1}(u_{j})\big\}\big|
\\
  & \leq 2\,\sum_{j=1}^{d} 
\sup_{\ub \in \Jring_{n}(\epsilon)} 
  \E_{\Xb}|Z_{jn}(u_{j}; \Xb)| 
  = o_{P}(1), 
\end{align*}
where the expectation on the first line is with respect to the random vector $\binom{Y_{j}}{\Xb}$  
keeping $\that_{j}$ fixed. 

Thus from the asymptotic uniform equicontinuity of the process 
$\big\{\sqrt{n}(P_{n}(f) - P(f)), f \in \mathcal{F} \big\}$
and assumption \ref{assump: true tj} one can conclude 
that uniformly in $\ub \in \Jring_{n}(\epsilon)$ 
\begin{equation*} %
   \sqrt{n}\,\big[P_{n}(f_{n}^{(\ub)} - f_{0}^{(\ub)})\big] 
 = \sqrt{n}\,\big[P(f_{n}^{(\ub)} - f_{0}^{(\ub)})\big] 
  + o_{P}(1),  
\end{equation*}
which combined with \eqref{eq: Gnhat as empir process} implies that 
(uniformly in $\ub \in \Jring_{n}(\epsilon)$)  
\begin{equation} \label{eq: Gnhat via implicit An plus Bn}
 \widehat{\mathbb{G}}_{n}(\ub) = \sqrt{n}\,\big[P_{n}(f_{0}^{(\ub)}) - P(f_{0}^{(\ub)})\big] + 
 \sqrt{n}\,\big[P(f_{n}^{(\ub)} - f_{0}^{(\ub)})\big] 
  + o_{P}(1).  
\end{equation}
Note that the first term on the right-hand side 
of~\eqref{eq: Gnhat via implicit An plus Bn} can be rewritten 
as 
\begin{align*}
 \sqrt{n}\,&\big[P_{n}(f_{0}^{(\ub)}) - P(f_{0}^{(\ub)})\big] 
 \\
  &= \frac{1}{\sqrt{n}} 
 \suman [\,\ind\{t_{1}(Y_{1i};\Xb_{i}) \leq F_{1\eps}^{-1}(u_1), 
   \dotsc, t_{d}(Y_{di};\Xb_{i}) \leq F_{d\eps}^{-1}(u_d) \} 
 - C(\ub) ]
 \\
 &= \frac{1}{\sqrt{n}} 
 \suman [\,\ind\{\eps_{1i} \leq F_{1\eps}^{-1}(u_1), 
   \dotsc, \eps_{di} \leq F_{d\eps}^{-1}(u_d) \} 
 - C(\ub) ]
= \AAA_{n}^*(\ub), 
\end{align*}
where $\AAA_{n}^*$ denotes the dominating term in the definition of $\AAA_{n}$ in \eqref{eq: An typical}. 

\medskip 

In what follows we need to explore the second term 
on the right-hand side of~\eqref{eq: Gnhat via implicit An plus Bn}  
which we denote as $\widetilde{\mathbb{B}}_{n}$. We will show that 
(uniformly in $\ub \in \Jring_{n}(\epsilon)$)  
\begin{equation} \label{eq: tilde Bn equals Bn}
 \widetilde{\mathbb{B}}_{n}(\ub) = \mathbb{B}_{n}(\ub) + o_{P}(1), 
\end{equation}
where $\mathbb{B}_{n}$ is introduced in~\eqref{eq: Bn typical}.

\subsection{Dealing with \texorpdfstring{$\widetilde{\mathbb{B}}_{n}$}{tilde Bn}}
Note that
\begin{align*}
 \notag  
 \widetilde{\mathbb{B}}_{n}(\ub) &= \sqrt{n}\,\big[P(f_{n}^{(\ub)} - f_{0}^{(\ub)})\big] 
\\
\notag 
& = \sqrt{n}\,\E_{\Xb} \bigg[F_{\epsb}\bigg(
  t_{1}\big\{\that_{1}^{-1}\big(\widetilde{F}_{n1\eps}^{-1}(u_{1}); \Xb\big); \Xb \big\},\dotsc,t_{d}\big\{\that_{d}^{-1}\big(\widetilde{F}_{nd\eps}^{-1}(u_{d}); \Xb\big); \Xb \big\}
\bigg)
\\
\notag 
& \qquad \qquad \quad  - F_{\epsb}\big(F_{1\eps}^{-1}(u_1),\dotsc,F_{d\eps}^{-1}(u_d) \big) \bigg]
\\
&= 
\sqrt{n}\,\E_{\Xb} \big[C\big(\widehat{\xib}_{n}^{\Xb}(\ub) 
\big) - C(\ub) \big], 
\end{align*}
where $F_{\epsb}$ stands for the joint distribution function 
of $\epsb = (\eps_1,\dotsc,\eps_d)\tr$ and 
\[
 \widehat{\xib}_{n}^{\xb}(\ub) 
 = \big(\widehat{\xi}_{1n}^{\xb}(u_{1}),\dotsc, \widehat{\xi}_{dn}^{\xb}(u_{d}) \big), 
 \quad  \text{with} \quad 
 \widehat{\xi}_{jn}^{\xb}(u_{j}) =
 F_{j\eps}\Big(t_{j}\big\{\that_{j}^{-1}\big(\widetilde{F}_{nj\eps}^{-1}(u_{j});
 \xb\big);\xb\big\}\Big).
\]
Now  the proof of~\eqref{eq: tilde Bn equals Bn} 
depends on whether we assume \ref{assump: copula1} and \ref{assump: process Z strict}  
or \ref{assump: copula2} and \ref{assump: process Z less strict}  . 

\smallskip 

Suppose that assumptions \ref{assump: copula1} and \ref{assump: process Z strict} hold. 
Then one can use the mean value theorem to bound 
\begin{align} 
\label{eq: Bn first order}
 \big|\widetilde{\mathbb{B}}_{n}(\ub) - \mathbb{B}_{n}(\ub) \big|
 &\leq \sqrt{n}\,\sum_{j=1}^{d} \E_{\Xb}\big|C^{(j)}(\ub_{n}^{\Xb}) - C^{(j)}(\ub)\big|\, 
 \big|Z_{jn}(u_{j};\Xb) \big|
\\
\notag 
 &\leq  O_{P}(1) \sum_{j=1}^{d} 
  r(u)\,\E_{\Xb}\,\big|C^{(j)}(\ub_{n}^{\Xb}) - C^{(j)}(\ub)\big|\, 
  M(\Xb)
  + o_{P}(1),  
\end{align}
where $\ub_{n}^{\Xb}$ lies between $\widehat{\xib}_{n}^{\xb}(\ub)$ and $\ub$.  
Now note that for each sequence $\{a_{n}\}$ going to infinity 
\[
  \E_{\Xb}\, M(\Xb) \ind\{M(\Xb) > a_{n}\} \ntoinfty 0.  
\]
Thus it is sufficient to consider 
\[
 \sum_{j=1}^{d} r(u)\,\E_{\Xb}\big|C^{(j)}(\ub_{n}^{\Xb}) - C^{(j)}(\ub)\big|\, 
  M(\Xb)\,\ind\{M(\Xb) \leq n^{1/3}\}.
\]
Further with the help of~\eqref{eq: Zjn} and assumption \ref{assump: process Z strict} 
one can conclude that for each $j \in \{1,\dotsc,d\}$
\begin{align*}
 \big|\widehat{\xi}_{jn}^{\Xb}(u) - u\big|&\, \ind\big\{M(\Xb) \leq n^{1/3}\big\} 
  =  \big|Z_{jn}(u_{j};\Xb)\big|\, \ind\big\{M(\Xb) \leq n^{1/3}\big\} 
\\
& \leq M(\Xb)\,\ind\big\{M(\Xb) \leq n^{1/3}\big\}\, 
 r(u)\, O_{P}\big(\tfrac{1}{\sqrt{n}}\big)  + o_{P}\big(\tfrac{1}{\sqrt{n}}\big) 
= o_{P}(1).   
\end{align*}
This implies that $\ub_{n}^{\Xb} = \ub + o_{P}(1)$ uniformly in 
$\ub \in J_{n}(\epsilon)$ (on the event $[M(\Xb) \leq n^{1/3}]$).
Note that 
\begin{equation*} %
 \E_{\Xb}\big|C^{(j)}(\ub_{n}^{\Xb}) - C^{(j)}(\ub)\big|\, 
  M(\Xb)\ind\{M(\Xb) \leq n^{1/3}\} 
 \leq \, \E_{\Xb} M(\Xb) < \infty.  
\end{equation*}
Thus for every $\eta > 0$  one can find $\delta > 0$ such that 
\[
\sup_{\ub \in [0,1]^{d}: u_{j}\in [0,\delta] \cup [1-\delta,1]}
 r(u)\,\E_{\Xb}\big|C^{(j)}(\ub_{n}^{\Xb}) - C^{(j)}(\ub)\big|\, 
M(\Xb)\ind\{M(\Xb) \leq n^{1/3}\} < \eta 
\]
and it remains to consider the case $u_{j} \in [\delta, 1-\delta]$. 
For this we can work conditionally 
on the event 
\[
 \sup_{\ub\in J_n(\epsilon)} \|\ub_{n}^{\Xb}-\ub\| \leq \tilde{\delta}, 
\quad  
\]
for each $\tilde{\delta} > 0$ and we require $\tilde{\delta} < \frac{\delta}{2}$.  
Now thanks to assumption~\ref{assump: copula1} one can bound
\begin{multline*} 
\sup_{\ub\in J_n(\epsilon), u_{j} \in [\delta,1-\delta]}\sum_{j=1}^{d} r(u)\,\E_{\Xb}\big|C^{(j)}(\ub_{n}^{\Xb}) - C^{(j)}(\ub)\big|\, 
M(\Xb)\ind\{M(\Xb) \leq n^{1/3}\}
\\ 
 \leq 
\sup_{\ub,\vb \in [0,1]^d: u_{j},v_{j}\in [\delta/2,1-\delta/2],
\|\ub - \vb\| \leq \tilde{\delta}} \big|C^{(j)}(\ub) - C^{(j)}(\vb)\big|
\, 
 \E_{\Xb} M(\Xb), 
\end{multline*}
which can be made arbitrarily small by taking $\tilde{\delta}$ small enough. 

This finishes the proof of Theorem~\ref{thm for param adjustment}. 

\medskip 

Suppose that assumptions~\ref{assump: copula2}, \ref{assump: process Z less strict} hold and introduce $a_{n} = n^{\frac{1/4-\gamma}{s-1}}$. Then  
similarly as in \eqref{eq: Bn first order} there exists 
$\ub_{n}^{\Xb}$  between $\widehat{\xib}_{n}^{\Xb}(\ub)$ and $\ub$ 
such that 
\begin{align} 
\notag 
 \big|\widetilde{\mathbb{B}}_{n}(\ub) - \mathbb{B}_{n}(\ub) \big|
 &\leq \sqrt{n}\,\sum_{j=1}^{d} \E_{\Xb}\big|C^{(j)}(\ub_{n}^{\Xb}) - C^{(j)}(\ub)\big|\, 
 \big|Z_{jn}(u_{j};\Xb) \big| \ind\{M(\Xb) < a_{n}\}
\\
\label{eq: Bn1 and Bn2}
 &\quad  + \sqrt{n}\,\sum_{j=1}^{d} \E_{\Xb}\big|C^{(j)}(\ub_{n}^{\Xb}) - C^{(j)}(\ub)\big|\, 
 \big|Z_{jn}(u_{j};\Xb) \big| \ind\{M(\Xb) \geq a_{n}\}
\\
\notag  
& = \mathbb{B}_{n1}(\ub) + \mathbb{B}_{n2}(\ub),  
\end{align}
where $\mathbb{B}_{n1}$ and $\mathbb{B}_{n2}$ stand 
for the first and second term on the right-hand 
side of \eqref{eq: Bn1 and Bn2}. 

Now with the help of \ref{assump: process Z less strict} 
one can bound $\mathbb{B}_{n2}$ as 
\begin{align*}
\sup_{\ub \in \tilde{J}_{n}(\epsilon)} \mathbb{B}_{n2}(\ub) 
& = \sqrt{n}\, o_{P}(n^{-1/4-\gamma}) 
 \E_{\Xb} \big[ M(\Xb) \ind\{M(\Xb) \geq a_{n}\} \big]
 \\
& = o_{P}(n^{1/4 - \gamma})  
\tfrac{\E_{\Xb} M^{s}(\Xb)}{a_{n}^{s-1}} = o_{P}(1).  
\end{align*}
Thus one can concentrate on $\mathbb{B}_{n1}$. Using 
once more the mean value theorem one gets  
\begin{align}
 \notag 
  \mathbb{B}_{n1}(\ub) 
 & \leq \sqrt{n}\,\sum_{j=1}^{d} \sum_{k=1}^{d} 
  \E_{\Xb}\,|C^{(j,k)}(\ub_{n}^{\Xb})|\, 
 \big|Z_{jn}(u_{j};\Xb) \big|
 \,\big|Z_{kn}(u_{k};\Xb) \big| \ind\{M(\Xb) < a_{n}\}
\\
\notag 
 &\leq o_{P}\big(n^{-2\gamma}\big)\,
  \sum_{j=1}^{d} \sum_{k=1}^{d} u_{j}^{\alpha}(1-u_{j})^{\alpha}\, 
 u_{k}^{\alpha}(1-u_{k})^{\alpha}
 \\
\label{eq: Bn minus tildeBn second} 
  & \qquad \qquad \qquad  \qquad \qquad  
 \E_{\Xb} \big[\big|C^{(j,k)}(\ub_{n}^{\Xb})\big|  
 \, M^{2}(\Xb) \big| \ind\{M(\Xb) < a_{n}\} \big],  
\end{align}
where $\widetilde{\ub}_{n}^{\Xb}$ lies between $\widehat{\xib}_{n}^{\Xb}(\ub)$ and $\ub$. 

Now if $\beta =0$ then \eqref{eq: tilde Bn equals Bn} follows immediately as the 
second derivatives of the copula function~$C$ are bounded. Thus 
suppose that $\beta > 0$. Then with the help of assumption~\ref{assump: process Z less strict}
uniformly in $u \in [\frac{\epsilon}{n^{\vartheta/2}},1-\frac{\epsilon}{n^{\vartheta/2}}]$ 
on the event $\{M(\Xb) < a_{n}\}$ it holds that
\begin{align*}
  \widehat{\xi}_{jn}^{\Xb}(u)    
  & = Z_{jn}(u; \Xb)     + u 
  \\ 
 & = u^{\alpha}(1-u)^{\alpha} o_{P}(a_{n}\,n^{-(1/4+\gamma)}) + u
 = u(1-u) o_{P}(1) + u,  
\end{align*}
as 
\[
 (1-\alpha)\tfrac{\vartheta}{2} + \tfrac{1/4-\gamma}{s-1} - \tfrac{1}{4} - \gamma \leq 0,  
\]
by the definition of~$\vartheta$ given in~\eqref{eq: vartheta}. 

Thus one can conclude that 
\[
 \PP\big(\tfrac{u}{2} \leq \xi_{jn}^{\Xb}(u); \forall u \in \big[\tfrac{\epsilon}{n^{\vartheta/2}},1 - \tfrac{\epsilon}{n^{\vartheta/2}}\big],  
  \forall j \in \{1,\dotsc,d\}
  \mid  M(\Xb) < a_{n}\big) 
 \ntoinfty 1
\]
and also 
\[
 \PP\big(\xi_{jn}^{\Xb}(1-u) \leq 1 - \tfrac{u}{2};  \forall u \in \big[\tfrac{\epsilon}{n^{\vartheta/2}},1 - \tfrac{\epsilon}{n^{\vartheta/2}}\big],  
  \forall j \in \{1,\dotsc,d\}
  \mid  M(\Xb) < a_{n} \big)  
 \ntoinfty 1. 
\]
Thus the same holds true also for the components of $\ub_{n}^{\xb}$. So 
with the help of assumption~\ref{assump: copula2} 
one can further rewrite \eqref{eq: Bn minus tildeBn second} as 
\begin{align}
\notag 
\mathbb{B}_{2n}(\ub) 
  &\leq 
  o_{P}(n^{-2\gamma})\,
  \sum_{j=1}^{d} \sum_{k=1}^{d} 
O\big(\tfrac{1}{u_{j}^{\beta-\alpha}(1-u_{j})^{\beta-\alpha}
 \,u_{k}^{\beta-\alpha}(1-u_{k})^{\beta-\alpha}}\big) 
 + o_{P}(1)
\\
\label{eq: B2n neglig}
& = o_{P}\big(n^{-[2\gamma - (\beta - \alpha)\vartheta]}\big) + o_{P}(1) 
 = o_{P}(1), 
\end{align}
where the last equality is implied the definition of~$\vartheta$ and 
properties of~$\gamma$ given in~\eqref{eq: vartheta} and~\eqref{eq: gamma} 
respectively as follows. Note that it is sufficient to consider $\beta > \alpha$. 
Now distinguish two cases. 

(i) First if \underline{$\vartheta = 1$} then by~\eqref{eq: vartheta}
\[
  \frac{\frac{s-2}{s-1}+4 \gamma \frac{s}{s-1}}{2(1-\alpha)} \geq 1,  
\]
form which one conclude that 
\[
 4 \gamma \geq 2(1-\alpha) \tfrac{s-1}{s} - \tfrac{s-2}{s} 
  = 2(1-\alpha) - \tfrac{2(1-\alpha)}{s} - 1 + \tfrac{2}{s} 
  \geq 2(1-\alpha) - 1 = 1 - 2 \alpha 
\]
and thus taking into consideration that $\beta \leq \frac{1}{2}$ 
\[
   2\gamma \geq  \tfrac{1}{2} - \alpha \geq  \beta - \alpha, 
\]
which was to proved. 

(ii) Second suppose that \underline{$\vartheta < 1$}. Then 
$\vartheta = \frac{\frac{s-2}{s-1}+4 \gamma \frac{s}{s-1}}{2(1-\alpha)}$ 
and thus 
\begin{align*}
 2\gamma - (\beta - \alpha)\vartheta &= 2\gamma - (\beta - \alpha) \tfrac{\frac{s-2}{s-1}+4 \gamma \frac{s}{s-1}}{2(1-\alpha)}
\\
 &= 2\gamma \,\big(1 - \tfrac{\beta - \alpha}{1-\alpha}\tfrac{s}{s-1}\big) - \tfrac{\beta - \alpha}{2(1-\alpha)}\tfrac{s-2}{s-1}  
 \geq 0, 
\end{align*}
where the last inequality follows by~\eqref{eq: gamma}.  

This concludes the proof of Theorem~\ref{thm for nonparam adjustment}.  

\smallskip

\subsection{Proof of Corollary~\ref{cor for nonparam adjustment}} The assertion follows 
simply  from the fact that 
if $4\gamma + 2 \alpha \frac{s}{s-1} \geq 1$, then $\vartheta = 1$ and 
$\Jtilde_{n}(\epsilon) = J_{n}(\epsilon)$ (see \eqref{eq: Jn epsilon}).

\smallskip 

\bibliographystyle{apalike}
\bibliography{short,ReferencesU}

\end{document}